\title{NIP henselian valued fields}
\author{Franziska Jahnke} 
\address{Institut f\"ur Mathematische Logik und Grundlagenforschung\\Einsteinstr. 62\\48149 M\"unster, 
Germany}
\email{franziska.jahnke@uni-muenster.de}
\author{Pierre Simon}
\address{Department of Mathematics\\University of California at 
Berkeley\\ 
733 Evans Hall\\Berkeley, CA 94720-3840, USA}
\email{simon@math.berkeley.edu }
\thanks{Partially supported by the Deutsche Forschungsgemeinschaft (DFG, German Research Foundation) via CRC 878 and 
under Germany's Excellence Strategy
EXC 2044--390685587, `Mathematics M\"unster: Dynamics--Geometry--Structure', ValCoMo (ANR-13-BS01-0006), NSF (grant no. 1665491) and a Sloan fellowship.}
\newtheorem{Th}{Theorem}[section]
\newtheorem*{Thm*}{Theorem}
\newtheorem*{Def}{Definition}
\newtheorem{Cor}[Th]{Corollary}
\newtheorem{Prop}[Th]{Proposition}
\newtheorem*{Rem}{Remark}
\newtheorem{Lem}[Th]{Lemma}
\newtheorem{Thm}[Th]{Theorem}
\newtheorem*{Lem*}{Lemma}
\numberwithin{equation}{section}
\newtheoremstyle{mystyle}{}{}{\slshape}{2pt}{\scshape}{.}{ }{} 
\newtheorem{thm}{Theorem}[section]
\newtheorem{cor}[thm]{Corollary}
\newtheorem{prop}[thm]{Proposition}
\newtheorem{lemme}[thm]{Lemma}
\newtheorem{fact}[thm]{Fact}
\theoremstyle{definition}
\theoremstyle{mystyle}
\theoremstyle{remark}
\newcommand{\monster}{\mathcal U}
\DeclareMathOperator{\tp}{tp}
\def\indsym#1#2{%
 \setbox0=\hbox{$\m@th#1x$}%
 \kern\wd0%
 \hbox to 0pt{\hss$\m@th#1\mid$\hbox to 0pt{$\m@th#1^{#2}$\hss}\hss}%
 \lower.9\ht0\hbox to 0pt{\hss$\m@th#1\smile$\hss}%
 \kern\wd0}
\def\nindsym#1#2{%
 \setbox0=\hbox{$\m@th#1x$}%
 \kern\wd0%
 \hbox to 0pt{\hss$\m@th#1\not$\kern1.4\wd0\hss}
 \hbox to 0pt{\hss$\m@th#1\mid$\hbox to 0pt{$\m@th#1^{#2}$\hss}\hss}%
 \lower.9\ht0\hbox to 0pt{\hss$\m@th#1\smile$\hss}%
 \kern\wd0}
\begin{document}
\begin{abstract} 
We show that any theory of tame henselian valued fields is NIP if and only if 
the theory of its residue field and the theory of its value group are NIP.
Moreover, we show that if $(K,v)$ is a henselian valued field of 
residue characteristic $\mathrm{char}(Kv)=p$ such that if $p>0$, 
depending on the characteristic of $K$ either the degree of imperfection or the 
index of the $p$th powers is finite, 
then $(K,v)$ is NIP iff $Kv$ is NIP and $v$ is roughly separably tame.
\end{abstract}

\maketitle

\section{Introduction}
In this paper, we study NIP henselian valued fields. More precisely, we consider the question of when NIP transfers from 
the residue field to the valued field.
Our approach generalizes well-known results of Delon and Gurevich-Schmitt 
for henselian fields of equicharacteristic $0$, and of B\'elair for
certain
perfect henselian fields of positive characteristic, in a uniform way. 

\begin{fact}[Delon-Gurevich-Schmitt]
Let $(K,v)$ be a henselian valued field of residue characteristic $\mathrm{char}(Kv)=0$. Then
$$(K,v) \textrm{ is NIP as a valued field }\Longleftrightarrow Kv \textrm{ is NIP as a pure field.}$$
\end{fact}
Delon first proved this theorem with the additional condition that the 
value group $vK$ is also NIP as an ordered abelian group
(\cite{Del}). Gurevich and Schmitt showed that the theory of any ordered abelian group is NIP (\cite[Theorem 3.1]{GS84}).

B\'elair has provided a positive characteristic analogue of this theorem, namely
\begin{fact}[{\cite[Corollaire 7.5]{Bel}}]
Let $(K,v)$ be an algebraically maximal Kaplansky field of characteristic $\mathrm{char}(K)=p$. Then
$$(K,v) \textrm{ is NIP as a valued field }\Longleftrightarrow Kv \textrm{ is NIP as a pure field.}$$
\end{fact}
See section \ref{trans} for the definition of an algebraically maximal Kaplansky field.
The first result of this paper is a generalization of these two results. 
Our main ingredients are the
algebra and model theory of separably tame
valued fields, in particular the Ax-Kochen Ershov Theorem for separably tame valued fields of a fixed finite degree of
imperfection, as developed by
Kuhlmann (\cite{Kuh13}) and by Kuhlmann and Pal (\cite{Kuh14}). The definition of separably tame is also given in section \ref{trans}.
Our first main result is the following Theorem, which we prove as Theorem \ref{amK} in section \ref{trans}:
\begin{Thm*}
Any complete theory of separably algebraically maximal 
Kaplansky fields of finite degree of imperfection 
is NIP if the corresponding theories of residue fields and value groups are both NIP.
\end{Thm*}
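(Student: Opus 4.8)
The plan is to combine the Ax--Kochen--Ershov machinery for separably tame valued fields of fixed finite degree of imperfection (Kuhlmann, Kuhlmann--Pal) with the standard reduction of NIP to the behaviour of indiscernible sequences. Recall that a theory is NIP as soon as every formula $\phi(x;y)$ with $|y|=1$ is NIP, and that $\phi(x;y)$ is NIP exactly when, for every indiscernible sequence $(a_i)_{i\in I}$ of $x$-tuples and every singleton $b$, the set $\{\,i\in I : \models\phi(a_i,b)\,\}$ is a finite union of convex subsets of $I$. So, fixing a model $(K,v)$ of the given theory together with $\phi$, an indiscernible sequence $(a_i)_{i\in I}$ from $K$, and a single element $b$, the entire task is to bound the number of alternations of the truth value of $\phi(a_i,b)$ along $I$.

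The engine for this is the relative embedding content of the separably tame Ax--Kochen--Ershov theorem. Passing to a monster model $\monster\models\mathrm{Th}(K,v)$ and fixing a small separably tame base over which $(a_i)_{i\in I}$ is indiscernible, I would use the Kuhlmann--Pal relative embedding and isomorphism results to show that the valued-field type $\tp(b/(a_i)_i)$ is governed by two pieces of data: the residue-field configuration (the residues of the relevant polynomials in $a_i$ and $b$, together with finitely many extra coordinates recording $p$-th power information relative to a fixed $p$-basis of $Kv$ --- this is where the hypothesis of fixed finite degree of imperfection is used) and the value-group configuration (the values of those polynomials). In other words, the separably tame and Kaplansky hypotheses are precisely what guarantee that, over a tame base, no \emph{defect} contribution intervenes and the field-level type is pinned down by residue and value data.

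Granting such a reduction, the truth value of $\phi(a_i,b)$ is controlled by finitely many residue-field conditions and value-group conditions on the joint configuration of $a_i$ and $b$; since $Kv$ and $vK$ are NIP, I expect each of these to alternate only finitely often along $I$, and hence $\phi(a_i,b)$ as well. The main obstacle lies in making this control rigorous. Unlike the equicharacteristic-$0$ case of Delon--Gurevich--Schmitt, imperfect Kaplansky fields admit no clean relative quantifier elimination, so $\phi$ cannot simply be rewritten as a Boolean combination of residue and value conditions; the separation of the residue and value contributions must instead be extracted from the relative embedding theorem and carried out uniformly along the whole sequence $(a_i)_{i\in I}$, rather than merely at the level of elementary equivalence of two valued fields. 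The two genuinely delicate points are the bookkeeping of imperfection --- keeping the extra $p$-basis coordinates inside a single finite tuple, so that passing to the NIP sorts $Kv$ and $vK$ really does bound the alternation --- and the fact that the parameter $b$ is mixed into the configuration, so that the induced data in $Kv$ and $vK$ need not form indiscernible sequences; circumventing the latter is exactly what forces one to invoke NIP (equivalently, finite VC dimension and stable embeddedness of the sorts) rather than mere indiscernibility.
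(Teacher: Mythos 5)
Your overall frame --- reduce NIP to finite alternation of $\phi(a_i;b)$ along an indiscernible sequence and invoke the Kuhlmann--Pal machinery for separably tame fields --- is in the same spirit as the paper's proof, but the step you label ``Granting such a reduction'' is precisely where the theorem lives, and the reduction as you state it is not available. The Separable Relative Embedding Property controls elementary embeddings of models over a separably tame base; it does not say that the type of a single element $b$ over a sequence of tuples is determined by residue-field and value-group configurations. The extension generated by $b$ over a model carries three kinds of new data: new residue elements, new value elements, and an immediate part; residue and value data alone cannot pin down the type until the extension has been made immediate. Your sketch has no mechanism for handling the immediate part, and that is exactly where the Kaplansky hypothesis (as opposed to mere tameness) does its work.

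The paper fills this gap with two devices absent from your proposal. First (Lemma \ref{lem_techNIP}), using stable embeddedness of the sorts (Lemma \ref{SE}, condition (SE)) together with NIP of $Kv$ and $vK$, one inserts enumerations of the residue field and value group of $M_{2i}(a)$ into the odd places of the indiscernible sequence of models while preserving indiscernibility and the type over the parameter; iterating this $\omega$ times produces an indiscernible sequence of models $(N_i)_{i<\omega}$ such that $N_0(a)/N_0$ is \emph{immediate}. Second (Lemma \ref{Im}, condition (Im)), for an immediate extension of a separably algebraically maximal Kaplansky field of fixed finite degree of imperfection, the quantifier-free type of $a$ over $N_0$ implies its full type: the tame closure of $N_0(a)^h$ is unique up to isomorphism (\cite[Theorem 5.3]{KPR} --- this uniqueness is where Kaplansky is essential) and is an elementary extension of $N_0$ by the Ax--Kochen--Ershov theorem for these fields (\cite[Theorem 6.2]{Kuh14}, which is also where the fixed finite degree of imperfection enters, rather than through $p$-basis bookkeeping). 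Since quantifier-free valued-field formulas are NIP, they can alternate only finitely often along $(N_i)$, forcing $\tp(N_0,a)=\tp(N_1,a)$ and contradicting the alternation. You correctly identified both obstacles (the induced residue and value data need not be indiscernible; there is no relative quantifier elimination), but identifying them is not circumventing them: as written, the proposal defers the entire content of the proof to an unproved claim that is, in the stated form, false without the preliminary reduction to immediate extensions.
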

In particular, our result allows additional NIP structure on 
the value group and 
the residue field (a fact which is not 
explicit in either Delon's or B\'elair's work). 

We also ask to what extent an NIP henselian valued field is already 
separably tame, or can be decomposed into separably tame
parts. Here, our main ingredients are Johnson's ideas from \cite{Joh15} and the results obtained by Kaplan, Scanlon and Wagner
in \cite{KSW}, in particular the fact that an NIP field of positive characteristic is either finite or Artin-Schreier closed.
Our second main result (proven as Theorem \ref{main2} in section \ref{sec4}) is the following
\begin{Thm*} 
Let $(K,v)$ be a henselian valued field of residue characteristic 
$\mathrm{char}(Kv)=p$. In case $(K,v)$ has mixed characteristic, 
assume that
$K^\times/(K^\times)^p$ is finite. 
If the characteristic of $K$ is positive, assume that $K$ has finite degree
of imperfection.
Then
$$(K,v) \textrm{ is NIP }\Longleftrightarrow Kv \textrm{ is NIP and }(K,v)\textrm{ is roughly separably tame}.$$ 
\end{Thm*}

The paper is organized as follows. In section \ref{crit}, we define two properties of a valued field, namely (SE) and (Im). (SE) ensures
that the residue field and value group are stably embedded, (Im) makes sure that the type of an element generating an immediate extension is implied
by NIP formulae. The main result in this section is Theorem \ref{SEIm}: 
Assuming (SE) and (Im), we get an NIP transfer 
from the theories of value group and residue field to the theory of the valued field. The last result of the section is Proposition \ref{se}
which proves that when we have an NIP structure and a stably embedded definable set which carries some additional NIP
structure, then the expansion by this extra structure remains NIP.

In section \ref{trans}, we show that the assumptions of Theorem \ref{SEIm} are satisfied in separably algebraically maximal
Kaplansky fields of a fixed finite degree of imperfection (Theorem \ref{amK}). This gives us the desired NIP transfer theorem
for these fields.

Finally, in section \ref{sec4}, we use the techniques from \cite{Joh15} to show Theorem \ref{main2}.

\section{An NIP transfer principle} \label{crit}

Recall that a definable set $D$ is said to be \emph{stably embedded} 
if for every formula $\phi(x;y)$, $y$ a finite tuple of variables from the same sort as $D$, there is a formula $d\phi(z;y)$ such that for any $a\in \monster^{|x|}$, there is a tuple $b\in D^{|z|}$, such that $\phi(a;D)=d\phi(b;D)$.

\medskip

Let $T$ be a complete theory of valued fields with possible additional structure. We assume the following:

\begin{description}
\item[(SE)] The residue field and the value group are stably embedded.

\item[(Im)] If $K\models T$ and $a\in \monster$ is a singleton such that $K(a)/K$ is an immediate extension, then $\tp(a/K)$ is implied by instances of NIP formulas.
\end{description}

\begin{lemme}
Let $I$ be indiscernible over a set $A$ and let $\bar d \in D$, where $D$ is $\emptyset$-definable, stably embedded with NIP induced structure. Then no formula with parameters in $A\bar d$ can have infinite alternation on $I$. 
\end{lemme}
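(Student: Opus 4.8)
The plan is to argue by contradiction, pushing the alternation from the $A$-indiscernible sequence $I$ into the induced structure on $D$, where NIP forbids infinite alternation. Suppose toward a contradiction that some formula $\phi(x;\bar a,\bar d)$ with $\bar a$ a tuple from $A$ alternates infinitely often on $I=(b_i)_i$; here I write $\phi(x;y,z)$ with $z$ ranging over the sort of $D$ and instantiated by $\bar d\in D$, and $y$ instantiated by $\bar a$.

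First I would use stable embeddedness to trade the external parameters $b_i$ for parameters inside $D$. For each $i$ the trace $\phi(b_i;\bar a,z)\cap D$ is an externally definable subset of $D$, so by (SE) there is a single formula $d\phi(w;z)$, with both $w$ and $z$ ranging over $D$, and elements $e_i\in D$ with $\phi(b_i;\bar a,\bar d')\leftrightarrow d\phi(e_i;\bar d')$ for all $\bar d'\in D$. In particular, since $\bar d\in D$, we get $\phi(b_i;\bar a,\bar d)\leftrightarrow d\phi(e_i;\bar d)$, so the formula $d\phi(w;\bar d)$, evaluated along $(e_i)_i$, inherits the infinite alternation of $\phi(b_i;\bar a,\bar d)$ along $I$.

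The delicate point is that (SE) produces the $e_i$ only non-canonically, so a priori $(e_i)_i$ need not be indiscernible. To repair this I would instead take $e_i$ to be the canonical code, in $D^{\mathrm{eq}}$, of the set $\phi(b_i;\bar a,z)\cap D$. This code is the value of one fixed $\emptyset$-definable function $f$ at $(b_i,\bar a)$, and since $I$ is indiscernible over $A\ni\bar a$, the sequence $(e_i)_i=(f(b_i,\bar a))_i$ is indiscernible over $\bar a$, hence over $\emptyset$. Rewriting $d\phi(w;\bar d)$ through this code keeps the defining subset of $D$ unchanged, so the alternation of the corresponding induced-structure formula along $(e_i)_i$ is still infinite, with $\bar d\in D$ as a parameter.

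Finally the contradiction comes from NIP of the induced structure: since $D$ is stably embedded with NIP induced structure, its $\mathrm{eq}$-expansion is NIP as well, and $(e_i)_i$ is an indiscernible sequence there. By Shelah's characterization of NIP, any formula can alternate only finitely often along an indiscernible sequence, so $d\phi(w;\bar d)$ alternates finitely along $(e_i)_i$, contradicting the infinite alternation obtained above. I expect the main obstacle to be precisely the bookkeeping of the second step: one must check that passing to canonical parameters keeps the relevant formula within the induced structure, and that NIP of the induced structure on $D$ transfers to $D^{\mathrm{eq}}$, so that the closing NIP argument legitimately applies to the coded formula. Once these imaginaries are handled correctly, the transfer of alternation is immediate.
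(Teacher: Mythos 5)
Your overall strategy (trade the external parameters for parameters inside $D$ via stable embeddedness, then contradict NIP of the induced structure) matches the paper's first move, but the pivotal step of your argument is justified in the wrong structure, and that is where the actual content of the lemma sits. The function $f$ sending $(b_i,\bar a)$ to the canonical parameter of $\phi(b_i;\bar a,z)\cap D$ is a definable function of $M^{\mathrm{eq}}$, so what your argument literally yields is that $(f(b_i,\bar a))_i$ is indiscernible \emph{in $M^{\mathrm{eq}}$}. That is of no use here: the lemma makes no NIP assumption on the ambient theory --- indeed, in its application inside Theorem \ref{SEIm} the ambient theory is assumed to have IP --- so infinite alternation along an $M^{\mathrm{eq}}$-indiscernible sequence contradicts nothing. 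What you need is indiscernibility of the codes viewed as elements of $D_{\mathrm{ind}}^{\mathrm{eq}}$, with respect to formulas of the \emph{induced} structure; the code of $X_i=\phi(b_i;\bar a,z)\cap D$ in $D_{\mathrm{ind}}^{\mathrm{eq}}$ is a different object from $f(b_i,\bar a)\in M^{\mathrm{eq}}$, and it is not obtained from $I$ by composing with any definable map of the induced structure (nothing definable in $D$ ``sees'' the $b_i$). So this is not bookkeeping; it is the crux. The claim is nevertheless true and your proof can be completed: every $\emptyset$-definable relation of $D_{\mathrm{ind}}$ is the trace of an $\emptyset$-definable $L$-relation (using that $D$ is $\emptyset$-definable), so for $i_1<\cdots<i_n$ and $j_1<\cdots<j_n$ an automorphism of the monster over $A$ carrying $(b_{i_k})_k$ to $(b_{j_k})_k$ fixes $\bar a$, preserves $D$ setwise, carries $X_{i_k}$ to $X_{j_k}$, hence carries representatives of the codes to $E$-equivalent representatives; $E$-invariance of the pulled-back relations then gives indiscernibility of the code sequence in $D_{\mathrm{ind}}^{\mathrm{eq}}$ over $\emptyset$, and NIP of $D_{\mathrm{ind}}^{\mathrm{eq}}$ (which does follow from NIP of $D_{\mathrm{ind}}$) finishes the job.

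It is instructive to compare with the paper's proof, which avoids this difficulty entirely by never producing an indiscernible sequence inside $D$. There one first uses indiscernibility of $I$ together with the infinite alternation to show that $\phi$ \emph{shatters} $I$ with witnesses in $D$: for any subset $I_0\subseteq I$, an automorphism over $A$ moving a suitable subsequence of $I$ onto an initial segment sends $\bar d$ to some $\bar d_{I_0}\in D$ with $\phi(I;\bar d_{I_0})=I_0$. Then stable embeddedness replaces each $a_i$ by $c_i\in D$ with $\phi(a_i;D)=\psi(c_i;D)$, so the very same parameters $\bar d_{I_0}$ shatter $\{c_i\}$ via $\psi$, all inside $D$ --- giving IP for the induced structure directly, with no imaginaries and no indiscernibility requirement on $(c_i)$. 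Shattering, unlike alternation along a sequence, is insensitive to whether the shattered set is indiscernible, and that is precisely what makes the paper's route shorter than the one you chose.
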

\begin{proof}
Let $\phi(x;\bar d)$ be such a formula, where we hide the parameters from $A$. Write $I=(a_i:i\in \mathcal I)$. As $D$ is stably embedded, there is a formula $\psi(z;\bar y)$ and for each $i$ a parameter $c_i$ in $D$ such that $\phi(a_i;D)=\psi(c_i;D)$. If the formula $\phi(x;\bar d)$ alternates infinitely often on $I$, then by indiscernibility of $I$, for every subset $I_0$ of $I$, 
we can find some $\bar d_{I_0}\in D$ such that $\phi(I;\bar d_{I_0})=I_0$. But then the same can be done for $J=(c_i:i<\omega)$, contradicting NIP on $D$.
\end{proof}

\begin{lemme}\label{lem_techNIP}
Assume that $(a_i:i<\omega)$ is an indiscernible sequence. For every $i$, let $b_{2i}$ be a tuple coming from a stably embedded sort $D$ on which the induced structure is NIP. Assume that the sequences $(a_{2i}b_{2i}:i<\omega)$ and $(a_{2i}a_{2i+1}:i<\omega)$ are indiscernible over some tuple $a$. Then we can find tuples $b_{2i+1}$ and $a'_i$ such that:

$\bullet_1$ $a'_{2i}=a_{2i}$,

$\bullet_2$ $\tp((a'_i:i<\omega)/a)=\tp((a_i:i<\omega)/a)$,

$\bullet_3$ the sequence $(a'_{i}b_{i}:i<\omega)$ is indiscernible.
\end{lemme}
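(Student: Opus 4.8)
The statement asks us to \emph{weave} the two given indiscernible sequences into a single one: the even part $(a_{2i}b_{2i}:i<\omega)$ is to be preserved verbatim, the odd coordinates $a'_{2i+1}$ are to be repositioned within the type $\tp((a_i:i<\omega)/a)$, and new $b_{2i+1}$ from $D$ are to be inserted so that the interleaved sequence $(a'_ib_i:i<\omega)$ becomes indiscernible. The plan is to realize, by compactness, the type $\Sigma$ in the variables $(x_{2i+1},y_{2i+1})_{i<\omega}$ (with $y_{2i+1}$ ranging over $D$) asserting simultaneously that $(a_0b_0,x_1y_1,a_2b_2,x_3y_3,\dots)$ is indiscernible and that $(a_0,x_1,a_2,x_3,\dots)\equiv_a(a_i:i<\omega)$; a realization then yields $a'_{2i+1}$, $b_{2i+1}$ and gives $\bullet_1$, $\bullet_2$, $\bullet_3$ at once. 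Note that one cannot instead Ramsey-extract an indiscernible sequence of the correct Ehrenfeucht--Mostowski type, since extraction would replace the prescribed even data $a_{2i}b_{2i}$; the odd data must be built on top of the fixed even sequence.

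For the $a$-coordinates I would exploit that $(a_{2i}a_{2i+1}:i<\omega)$ is indiscernible over $a$. This pair-sequence fixes not only the within-pair type $\tp(a_{2i}a_{2i+1}/a)$ but also, via indiscernibility of the pairs, the straddling type $\tp(a_{2i+1}a_{2i+2}/a)$ and all longer patterns respecting the parity blocks; consequently the joint type $\tp((a_i:i<\omega)/a)$ is determined up to repositioning the odd entries. This is exactly the freedom $\Sigma$ allows: I may move each $a'_{2i+1}$ within its type over $a$ and the surrounding even terms without disturbing $\bullet_2$, and it is this latitude that will be spent to accommodate the odd $b$'s. Thus the $x$-part of any finite fragment of $\Sigma$ is satisfiable using the pair sequence.

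The crux is the $b$-coordinate: inserting the $b_{2i+1}\in D$ so that the full sequence, not merely its projections, is indiscernible. Here the hypotheses on $D$ are essential. Because $D$ is stably embedded, the relevant types of the inserted $b$'s over the $a$-data are coded by parameters from $D$, so a finite fragment of $\Sigma$ reduces to finding $b$-values inside $D$ that place the chosen odd points into the pattern already dictated by the even indiscernible sequence $(a_{2i}b_{2i})$. Because the induced structure on $D$ is NIP, these finitely many requirements are simultaneously realizable: by the preceding Lemma no formula with parameters among the $b$'s alternates infinitely often along an indiscernible sequence, so only boundedly many $b$-patterns occur and the matching of adjacent even--odd and odd--even transitions collapses to finitely many conditions that are already guaranteed by the indiscernibility of $(a_{2i}b_{2i})$ over $a$. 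I expect this simultaneous realizability of the odd $b$'s — equivalently, verifying full indiscernibility rather than indiscernibility of the two projections — to be the main obstacle and the only place where stable embeddedness and NIP of $D$ are used; once it is established, compactness produces the $b_{2i+1}$ and the desired woven sequence.
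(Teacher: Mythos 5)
Your setup is correct as far as it goes: realizing your type $\Sigma$ by compactness would indeed deliver $\bullet_1$--$\bullet_3$ at once, and you rightly observe that Ramsey extraction is unavailable because the even data $a_{2i}b_{2i}$ must be kept verbatim. But there is a genuine gap exactly at the point you yourself flag as the crux: finite satisfiability of $\Sigma$ is never proved. The justification offered is circular --- ``the matching of adjacent even--odd and odd--even transitions collapses to finitely many conditions that are already guaranteed by the indiscernibility of $(a_{2i}b_{2i})$ over $a$'' merely restates what has to be shown, and its last clause is false: indiscernibility of $(a_{2i}b_{2i}:i<\omega)$ over $a$ says nothing about the types of tuples that mix odd points with the even pairs, and producing $b$'s realizing exactly those mixed types is the entire content of the lemma. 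Stable embeddedness does not produce such $b$'s either: for a fixed choice of odd elements there is in general no consistent assignment at all. A second, related imprecision: you claim you may move each $a'_{2i+1}$ ``within its type over $a$ and the surrounding even terms'' without disturbing $\bullet_2$; this conflates pointwise with joint type preservation, since $\bullet_2$ concerns the type of the whole sequence over $a$, and preserving each $\tp(a'_{2i+1}/a\,(a_{2j}:j<\omega))$ separately does not preserve it.

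The missing engine is the paper's actual construction, which never tries to find $b$'s for prescribed odd elements; instead it manufactures new odd elements that already own $b$'s. One stretches $(a_i)$ to a very long sequence with the same properties, picks sparse even indices $\lambda_0<\lambda_1<\cdots$, and sets $C=(a_{\lambda_k}b_{\lambda_k}:k<\omega)$. The preceding lemma (no formula with parameters in $A\bar d$, $\bar d\in D$, alternates infinitely on a sequence indiscernible over $A$) yields, by counting alternations inside each long gap, even indices $i_k\in(\lambda_k,\lambda_{k+1})$ such that $(a_{i_k}:k<\omega)$ and $(a_{i_k+1}:k<\omega)$ have the same type over $C$; note this is again a joint statement, not a pointwise one. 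An automorphism fixing $C$ then transports the existing tuples $b_{i_k}$ to tuples $b'_k$ attached to the elements $a_{i_k+1}$, and the woven sequence $(a_{\lambda_k}b_{\lambda_k},a_{i_k+1}b'_k:k<\omega)$ is indiscernible because it has the same type as $(a_{\lambda_k}b_{\lambda_k},a_{i_k}b_{i_k}:k<\omega)$, a subsequence of the indiscernible even sequence; finally an automorphism over $a$ sending $C$ to $(a_{2i}b_{2i}:i<\omega)$ produces the $a'_{2i+1}$ and $b_{2i+1}$. So NIP of $D$ and stable embeddedness enter not as a vague finiteness of ``patterns'' but as a concrete alternation bound enabling a pigeonhole. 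Your compactness framework could in principle be driven by a finite version of this same argument (for finitely many formulas the alternation bounds are finite, so finite gaps inside the original $\omega$-sequence would suffice), but without that mechanism the proposal does not prove the lemma. Note finally that the real delicacy is the tension between $\bullet_3$ and $\bullet_2$: the substitute odd elements must look like even elements over $C$ (to inherit $b$'s) yet like genuine odd partners over $a$ (to preserve the type of the sequence); this tension, which the paper's choice of $C$ and final automorphism over $a$ are designed to negotiate, is invisible in your write-up.
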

\begin{proof}
First, extend the sequence $(a_i:i<\omega)$ to a very long one $(a_i:i<\kappa)$ with the same properties. Pick an increasing sequence $\lambda_0<\lambda_1<\cdots$ of even elements of $\kappa$, far apart from each other. Set $C= (a_{\lambda_i} b_{\lambda_i}:i<\omega)$.

By the previous lemma, for each $k<\omega$, we can find $\lambda_k < i_k < \lambda_{k+1}$ such that $i_k$ is even and the two sequences $(a_{i_k}:k<\omega)$ and $(a_{i_k+1}:k<\omega)$ have the same type over $C$. Hence we can find points $b'_{k}$ such that the two sequences $(a_{i_k}b_{i_k}:k<\omega)$ and $(a_{i_k+1}b'_k:k<\omega)$ have the same type over $C$. Now, let $\sigma$ be an automorphism over $a$ which sends $C$ to $(a_{2i}b_{2i})_{i<\omega}$, and set $a'_{2k+1} = \sigma(a_{i_k+1})$ and $b_{2k+1} = \sigma(b'_k)$.
\end{proof}

\begin{thm} \label{SEIm}
Under assumptions (SE) and (Im), the theory $T$ is NIP iff the theories of the residue field and value group are.
\end{thm}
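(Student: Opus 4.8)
The plan is to prove both directions, with the left-to-right implication being routine and the right-to-left implication carrying all the content. For the forward direction, if $T$ is NIP then any definable (or interpretable) set of $\monster$, equipped with all of its induced structure, is again NIP; since the residue field and the value group are such sorts, their theories are NIP. So I would concentrate on the converse: assuming the residue field and value group are NIP as stably embedded sorts with full induced structure, I want to conclude that $T$ is NIP.

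First I would reduce to a combinatorial statement about indiscernible sequences. Recall that $T$ is NIP as soon as every formula $\phi(x;y)$ with $x$ a single variable is NIP, and that such a $\phi$ is NIP precisely when for every indiscernible sequence $I=(b_i:i<\omega)$ of $y$-tuples and every singleton $a$ of sort $x$, the truth value of $\phi(a;b_i)$ alternates only finitely often along $I$. Thus it suffices to fix an indiscernible sequence $I=(b_i)$ in the valued field, a singleton $a$, and the base $A$ over which $I$ is indiscernible, and to bound the alternation of $\phi(a;b_i)$.

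The heart of the argument is a valuation-theoretic decomposition of the one-type of the single element $a$. Over the field generated by $A$ and $I$, the type of $a$ splits into three pieces: a \emph{value-group part} (the cut that $\{v(a-c):c \text{ in the field generated}\}$ determines in $vK$), a \emph{residue part} (the residues of the suitably scaled differences), and an \emph{immediate part} (how $a$ is pseudo-approximated by elements of the field). Hypothesis (SE) ensures that the first two pieces are recorded by parameters living in the stably embedded NIP sorts $vK$ and $Kv$, so that by the first lemma of this section they cannot be responsible for infinite alternation of $\phi(a;b_i)$ along $I$; hypothesis (Im) ensures that the immediate part of $\tp(a/\cdot)$ is implied by instances of NIP formulas, so it too is alternation-finite. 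To make the first lemma genuinely applicable I would attach to each $b_i$ its relevant residue- and value-group data $r_i$ and then invoke Lemma \ref{lem_techNIP} to realign the $b_i$ into a sequence $b'_i$, agreeing with $b_i$ on a cofinal set of indices and of the same type over $A$, for which the combined sequence $(b'_i r_i)$ is genuinely indiscernible; only then do the stably embedded NIP inputs feed into the alternation bound.

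The main obstacle I anticipate is precisely this interaction between the three parts. I must extract the value-group and residue data uniformly along $I$ and organize it into an honestly indiscernible sequence in the stably embedded sorts \emph{without} disturbing the field-theoretic indiscernibility, and simultaneously arrange that the immediate part is governed by (Im) once the residue and value data have been fixed. Patching these together — splitting $I$ into finitely many segments and showing on each, via the first lemma for the embedded parts and via (Im) for the immediate part, that $\phi(a;b_i)$ is constant — is the delicate step, and the bookkeeping of Lemma \ref{lem_techNIP} is exactly what is designed to carry it out.
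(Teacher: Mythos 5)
Your forward direction and your general strategy of combining (SE), the first lemma, and Lemma \ref{lem_techNIP} are on target, but the core mechanism you propose --- splitting $\tp(a/\cdot)$ into a value-group part, a residue part and an immediate part, and treating each separately --- has a genuine gap. Hypothesis (Im) does not speak about the ``immediate part'' of an arbitrary type: it applies only when the base is a \emph{model} $K\models T$ and the \emph{whole} extension $K(a)/K$ is immediate, and only then does it say that the full type $\tp(a/K)$ (in the possibly enriched language, not merely its valued-field reduct) is implied by instances of NIP formulas. In a general theory $T$ of valued fields with extra structure there is no available theorem decomposing the type of $a$ over the field generated by $A$ and $I$ (which is not even a model, so (Im) is silent over it) into these three pieces, nor any guarantee that the truth value of an arbitrary formula $\phi(a;b_i)$ of the enriched language is determined by such pieces. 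The ``delicate patching'' you defer to the end is precisely the step for which no proof is offered, and it is not clear one exists.

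The paper avoids patching altogether by an absorption-and-iteration argument that your proposal is missing. Working contrapositively with an alternation witness $\phi(a;a_i)$, one first enlarges each $a_i$ to an enumeration of a model $M_i$; then one adjoins to each even-indexed $a_{2i}$ enumerations of the residue field and value group of $M_{2i}(a)$, and uses Lemma \ref{lem_techNIP} (together with (SE), which makes these sorts stably embedded with NIP induced structure) to re-choose the odd-indexed points so that the extended sequence is again indiscernible and the type of the sequence over $a$ is preserved. One then closes off under taking models and \emph{iterates this $\omega$ times}. In the limit one obtains an indiscernible sequence of models $(N_i:i<\omega)$, with pairs indiscernible over $a$, such that $N_0(a)/N_0$ is genuinely immediate --- no residue or value-group ``part'' is left over. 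Now (Im) applies to the entire type $\tp(a/N_0)$, which is therefore implied by instances of NIP formulas; these can alternate only finitely often along $(N_i:i<\omega)$, forcing $\tp(N_0,a)=\tp(N_1,a)$ and contradicting the alternation of $\phi$. If you want to rescue your outline, this iteration making the extension immediate over a model is the idea you need to add; handling the three ``parts'' in parallel over a non-model base cannot be pushed through with (Im) as stated.
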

\begin{proof}
Assume that $T$ is not NIP. Then we can find an indiscernible sequence $(a_i:i<\omega)$, a singleton $a$ and a formula $\phi(x;y)$ such that $\phi(a;a_i)$ holds if and only if $i$ is even. By Ramsey and compactness, we may assume that the sequence of pairs $(a_{2i}a_{2i+1}:i<\omega)$ is indiscernible over $a$. We may also increase each $a_i$ so that it enumerates a model $M_i$. Now, for each $i$, let $b_{2i}$ be an enumeration of the residue field of $M_{2i}(a)$ and $c_{2i}$ an enumeration of its value group. Applying Lemma \ref{lem_techNIP} twice: for the value group and for the residue field and changing the points $a_{2i+1}$, we can find tuples $b_{2i+1}$ and $c_{2i+1}$ such that $(a_i\hat{~}b_i\hat{~}c_i)_{i<\omega}$ is indiscernible and $\tp((a_i)_{i<\omega}/a)$ is preserved.
Then, we can extend every tuple $a_i\hat{~}b_i\hat{~}c_i$ to a model. Iterate this $\omega$ times so as to have the following:

$\boxtimes$ an indiscernible sequence $(N_i:i<\omega)$ of models such that the even places extend the original $a_i$, the type of the sequence over $a$ extends the initial one, $(N_{2i}N_{2i+1}:i<\omega)$ is indiscernible over $a$ and $N_0(a)/N_0$ is immediate.

Now by assumption (Im), the type $\tp(a/N_0)$ is implied by NIP formulas. Any such formula can only alternate finitely often on the sequence $(N_i:i<\omega)$. 
Hence we must have $\tp(N_0,a)=\tp(N_1,a)$, 
contradicting the initial assumption.
\end{proof}

\begin{Rem} \label{NTP}
One can can also prove Theorem \ref{SEIm} in the NTP$_2$ 
context, i.e. one can show that under assumptions (SE) and (Im), the 
theory $T$ is NTP$_2$ if and only if the theories of the residue field and value group are.
We do not include a proof here as the result is implicit in \cite{CH14},
using \cite[Lemma 3.8]{CH14} instead of Lemma \ref{lem_techNIP}.
\end{Rem}

\begin{prop} \label{se}
Let $T$ be NIP in a relational language $L$, let $M\models T$ and let $D$ be a definable set. Assume that $D$ is stably embedded. Let $D_{ind}$ be the structure with universe $D(M)$ and the induced $L$-structure. Consider an expansion $D_{ind}\subseteq D'$ to a relational language $L_p$ and let $M'$ be the corresponding expansion of $M$ in the language $L'=L\cup L_p$. Then the definable set $D$ is stably embedded in $M'$. Furthermore, if $D'$ is NIP, then so is $M'$.
\end{prop}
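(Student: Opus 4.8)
The plan is to exploit that every $L_p$-relation is a subset of a power of $D$, so that an $L_p$-atom is false as soon as one of its arguments leaves $D$; since $L$ is relational there are no terms, and every atom is a relation symbol applied to variables. Fix a monster model $\monster'\models\mathrm{Th}(M')$ and let $\monster$ be its $L$-reduct. For the stable embeddedness I would use the standard automorphism criterion (valid in a monster model): a definable set is stably embedded exactly when every elementary bijection of it onto itself extends to an automorphism of the ambient structure. Let $\sigma$ be an automorphism of the $L'$-induced structure on $D(\monster')$; in particular $\sigma$ preserves the $L$-induced structure $D_{ind}$ and the $L_p$-relations. As $\sigma$ is $L$-elementary, stable embeddedness of $D$ in $M$ lets it extend to an $L$-automorphism $\theta$ of $\monster$; since $D$ is $L$-$\emptyset$-definable, $\theta(D)=D$ and $\theta{\restriction}D=\sigma$. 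Because each $L_p$-relation lives on $D$ and $\sigma$ preserves it, $\theta$ preserves $L_p$ as well, so $\theta\in\mathrm{Aut}(\monster')$ extends $\sigma$. This yields stable embeddedness of $D$ in $M'$; and the same argument, via invariance under $\mathrm{Aut}(D')$ and compactness, shows that the induced $L'$-structure on $D$ adds nothing to $D'$: every $\monster'$-definable subset of a power of $D$ is defined by a $D'$-formula with parameters from $D$.

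For the NIP statement assume $D'$ is NIP; the $L$-reduct $M$ is NIP by hypothesis. Suppose toward a contradiction that some $L'$-formula $\phi(x;\bar y)$ has IP, witnessed by an $\emptyset$-indiscernible sequence $(\bar y_i)_{i<\omega}$ and a parameter $a$ with $\phi(a;\bar y_i)\iff i$ even. By indiscernibility the set of coordinates of $\bar y_i$ lying in $D$ is the same for all $i$; write $\bar y_i=\bar u_i\,\bar e_i$ with $\bar e_i\in D$ and $\bar u_i$ disjoint from $D$. Viewing $(a,\bar u_i)$ as parameters and $\bar e$ as a $D$-variable, the trace $\{\bar e\in D:\phi(a;\bar u_i,\bar e)\}$ is, by the previous paragraph, defined by a fixed $D'$-formula $\theta(\bar e;\bar z)$ evaluated at some code $c_i\in D$ (the code exists pointwise, and a single $\theta$ works by compactness). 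Hence
\[
\phi(a;\bar y_i)\iff D'\models\theta(\bar e_i;c_i),
\]
so the entire alternation is now governed by the single $D'$-formula $\theta$ evaluated at the $D$-tuples $(\bar e_i,c_i)$.

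It remains to derive a contradiction inside the NIP structure $D'$. The sequence $(\bar e_i)$ is $\emptyset$-indiscernible in $D'$, but the codes $c_i$ depend on $a$ and therefore break this indiscernibility, so one cannot simply read off a contradiction: an alternating sequence need not contain an alternating \emph{indiscernible} subsequence. This reconciliation is the main obstacle, and I expect to resolve it exactly as in the auxiliary lemmas of this section, by a shrinking-of-indiscernibles argument: extend $(\bar y_i)$ to a long indiscernible sequence, choose the codes along it, and extract so as to make $(\bar e_i\,c_i)$ an indiscernible sequence in $D'$ witnessing infinite alternation of $\theta$, against NIP of $D'$; the $L$-information is harmless because it is computed in the NIP reduct $M$. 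I would expect the bookkeeping here to mirror Lemma~\ref{lem_techNIP} and the first lemma of the section. Finally, I note that the purely syntactic route to the first assertion stalls precisely at existential quantification over the complement of $D$ — where the trace on $D$ of the quantified formula varies with the witness and one is faced with an infinite union of $D$-definable sets — which is why I would route that assertion through automorphisms rather than through a normal-form induction.
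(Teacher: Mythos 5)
Your treatment of the first assertion is correct and genuinely different from the paper's. You obtain stable embeddedness of $D$ in $M'$ semantically, via the automorphism-extension characterization in a monster model, and you identify the induced structure on $D$ with that of $D'$ by an invariance-plus-compactness (Svenonius-type) argument. The paper instead proceeds syntactically, quoting Lemma 46 of \cite{ExtDef2} to show that every $L'$-formula is equivalent to a $D$-bounded formula $Q_1 z_1\in D\dots Q_n z_n\in D\,\bigvee_i \phi_i\wedge\chi_i$ with $\phi_i$ quantifier-free in $L$ and $\chi_i$ quantifier-free in $L_p$. Your route buys independence from that citation (at the cost of the usual saturation caveats, and of the fact that ``invariance and compactness'' is really a closed-map argument on type spaces which you only gesture at); the paper's route buys, in addition to stable embeddedness, a normal form on which the NIP proof can induct --- and that is precisely what you end up missing.

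The NIP half has a genuine gap, located exactly at the step you defer. After coding traces you reach: $D'\models\theta(\bar e_i;c_i)$ iff $i$ is even, with $(\bar e_i)$ indiscernible in $D'$ but the codes $c_i$ moving with $i$. This \emph{diagonal} configuration does not contradict NIP of $D'$, and no extraction can make it do so. First, NIP is violated by a formula alternating on an indiscernible sequence against a \emph{fixed} parameter; a pattern in which argument and parameter move together is harmless. Concretely, in a model of DLO (which is NIP) take $\bar e_i=i$, $\theta(x;y)$ to be $x<y$, and $c_i=i+\tfrac{1}{2}$ for even $i$, $c_i=i-\tfrac{1}{2}$ for odd $i$: then $(\bar e_i)$ is indiscernible and $\theta(\bar e_i;c_i)$ holds exactly for even $i$. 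So the data you retain after coding is consistent with NIP of $D'$; any completion of the proof must go back to the formula $\phi$ and the single parameter $a$. Second, the specific repair you propose is self-defeating: if $(\bar e_i c_i)_{i<\omega}$ were indiscernible, the truth value of $\theta(\bar e_i;c_i)$ would be the same for every $i$, so no extraction can make the pair sequence indiscernible \emph{while preserving the alternation}; and a Ramsey or Erd\H{o}s--Rado extraction from a long sequence only produces order-type patterns $\theta(\bar e_j;c_k)$ governed by the relative position of $j$ and $k$, all of which are NIP-consistent. The paper avoids the diagonal trap by never separating codes from the quantifier structure: it inducts on the number of $D$-quantifiers of a $D$-bounded formula, and in the step $\phi(x;y)=(\exists z\in D)\psi(xz;y)$ the indiscernible sequence $(a_i)$ sits in the $x$-variable while the parameter $c$ stays fixed; witnesses $b_i\in D$ exist at even indices, Lemma \ref{lem_techNIP} fills in the odd ones so that $(a_ib_i)_{i<\omega}$ is indiscernible, and then $\psi$ alternates against the fixed $c$, contradicting the induction hypothesis. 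Some such restructuring (or the honest-definitions argument of Theorem 2.4 of \cite{DepPairs}) is what your sketch is missing.
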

\begin{proof}
This is more or less implicit in \cite{ExtDef2}. We may assume that $D'$ admits elimination of quantifiers in the relational language $L_p$ and also that $M$ admits elimination of quantifiers in $L$. Call a formula $D$-bounded if it is of the form $Q_1 z_1 \in D \ldots Q_n z_n \in D \bigvee_{i<m} \phi_i(\bar x,\bar z)\wedge \chi_i(\bar x,\bar z)$, where $\phi_i(\bar x,\bar z)$ is a quantifier-free $L$-formula and $\chi_i(\bar x,\bar z)$ is a quantifier-free $L_p$-formula (with all variables restricted to $D$). Lemma 46 in \cite{ExtDef2} implies that every $L'$-formula is equivalent to a $D$-bounded formula. (This lemma is stated and proven in the case where the expansion is by adding a unique unary predicate, but works just as well in the general case.) This implies that $D$ remains stably embedded in the expansion to $L'$ and the induced structure is exactly the one coming from $L_p$.

It remains to show that every $D$-bounded formula is NIP in $M'$. This is proved exactly as Theorem 2.4 in \cite{DepPairs} (except that we do not need honest definitions since we assume stable embeddedness). We give details. Let $\phi(x;y)$ be a $D$-bounded formula. We induct on the number of quantifiers $(Qz\in D)$ at the beginning of $\phi$. If there are none, then $\phi$ is a boolean combination of $L$ and $L_p$-formulas and hence is NIP. Now assume that $\phi(x;y)=(\exists z\in D)\psi(xz;y)$, where $\psi(xz;y)$ is a $D$-bounded formula, which we can assume to be NIP by induction. Work in some sufficiently saturated model $N$. Assume that $\phi$ has IP and let $(a_i:i<\omega)$ be an indiscernible sequence of tuples of size $|x|$ and $c$ such that $\phi(a_i;c)$ holds if and only if $i$ is even. Hence for even $i$, we can find $b_i \in D$ such that $\psi(a_ib_i;c)$ holds. By Lemma \ref{lem_techNIP} we can find points $b_i$ for odd $i$ such that the full sequence of pairs $(a_ib_i:i<\omega)$ is indiscernible. For $i$ odd, $\neg (\exists z\in D)\psi(a_iz;c)$ holds by hypothesis and in particular $\models \neg \psi(a_ib_i;c)$. Therefore the formula $\psi(xz;c)$ alternates on the sequence $(a_ib_i)_{i<\omega}$ contradicting NIP.
\end{proof}

\section{An NIP transfer principle for tame and some separably tame fields} \label{trans}
In this section, we apply the results from the previous section to get an NIP transfer principle for tame fields.

\begin{Def}
Let $(K,v)$ be a valued field and $p=\mathrm{char}(Kv)$. 
\begin{enumerate}
\item
We say that $(K,v)$ is \emph{(separably) algebraically maximal} if $(K,v)$ has no proper immediate (separable) algebraic extensions.
\item We say that $(K,v)$ is \emph{(separably) tame} if the value group $vK$ is $p$-divisible, the residue field $Kv$ is perfect
and $(K,v)$ is (separably) algebraically maximal.
\end{enumerate}
\end{Def}
 
Note that the definitions of tame and separably tame are not those given in \cite{Kuh13}, but by 
\cite[Theorem 3.2]{Kuh13} (respectively \cite[Theorem 3.10]{Kuh13})
these are equivalent to the ones given there. If $K$ is perfect, then tameness and separable tameness coincide. Both
tameness and separable tameness imply henselianity.

Recall that for a field $K$ of characteristic $p>0$, we have that $K^p$ is a subfield of $K$ and hence $K$ is a vector space
over $K^p$. 
The vector space dimension of $K$ over $K^p$ is always some power $p^e$ of $p$.
The exponent $e$ is called the \emph{degree of imperfection} of $K$. 
A field is perfect
if and only if its degree of imperfection is $0$.

\begin{lemme} \label{SE}
Any complete theory of separably tame fields of finite degree of imperfection 
satisfies condition (SE). In fact, in any separably tame field of finite
degree of imperfection, both 
the residue field and the value group are purely stably embedded.
\end{lemme}
\begin{proof}
Let $(K,v)$ be a separably tame valued field of finite degree of imperfection and take $(K,v)\prec (L,v)$ a sufficiently saturated elementary extension. Let $a,b\in Lv$ having the same type in the pure field $Lv$ over $Kv$. We need to show that $a$ and $b$ have the same type over $K$ in $(L,v)$. Let $L_0$ be an $\aleph_0$-saturated elementary submodel of $L$ containing $Ka$. Note that by elementarity of $(K,v)\prec (L_0,v)$, 
the extension $L_0v|Kv$ is separable and the quotient $vL_0/vK$ is torsion free. Fix some $\sigma: L_0v \to Lv$ which is an elementary embedding over $Kv$ (in the pure field language) so that $\sigma(a)=b$ and $\rho: vL_0 \to vL$ an elementary embedding over $vK$ in the pure group language. By the Separable Relative Embedding Property of \cite[Section 4]{Kuh14}, which holds in separably
tame valued fields by Theorem 5.1 of the
same article, we can find an embedding $\iota : (L_0,v) \to (L,v)$ fixing $K$ and inducing $\sigma$ and $\rho$.

By the same \cite[Theorem 5.1]{Kuh14}, separably tame valued fields of finite degree of imperfection are separably relatively model complete: 
if $(K,v) \subseteq (L,w)$ is a separable extension of separably tame valued 
fields of the same finite degree of imperfection with $Kv \prec Lw$ and 
$vK \prec wL$, one has $(K,v)\prec (L,w)$. This implies that $(\iota (L_0),v)\prec (L,v)$. Therefore we have: \[\tp_L(a/K)=\tp_{L_0}(a/K)=\tp_{\iota(L_0)}(b/K)=\tp_{L}(b/K),\] and thus \[ \tp_{Lv}(a/K)\vdash \tp_{L}(a/K),\]
where the first type is in the pure field $Lv$.

This implies that the residue field is purely stably embedded. A similar argument shows that the value group is purely stably embedded.
\end{proof}

In order to prove (Im) for a suitable class of fields, we need another definition:
\begin{Def}
Let $(K,v)$ be a valued field of residue characteristic $p$. We say that $(K,v)$ is \emph{Kaplansky} if the value group $vK$
is $p$-divisible and the residue field $Kv$ admits no finite extensions of degree divisible by $p$.
\end{Def}
Note that if a valued field is algebraically maximal and Kaplansky, then it is in particular tame. The converse does not hold.

\begin{lemme} \label{Im}
Any complete theory of separably algebraically maximal Kaplansky fields of 
finite degree of imperfection satisfies (Im).
\end{lemme}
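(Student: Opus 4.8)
The plan is to reduce the statement to the classical theory of pseudo-Cauchy sequences together with Kaplansky's uniqueness of immediate extensions, and then to observe that the relevant defining formulas are NIP because of the tree structure of balls. So suppose $K\models T$ and $a$ is a singleton with $K(a)/K$ immediate and $a\notin K$. First I would recall the standard fact that any element generating a proper immediate extension is the pseudo-limit of a pseudo-Cauchy sequence $(a_\rho)_{\rho<\lambda}$ from $K$ having no pseudo-limit in $K$; write $\gamma_\rho=v(a_{\rho+1}-a_\rho)$, a strictly increasing cofinal sequence of widths in $vK$, and choose $z_\rho\in K$ with $v(z_\rho)=\gamma_\rho$.

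Next I would split into two cases. If $a$ is transcendental over $K$, then $(a_\rho)$ is necessarily of transcendental type (an algebraic-type sequence has only algebraic immediate pseudo-limits), and I claim that $\tp(a/K)$ is implied by the partial type $\Sigma(x)=\{\,v(x-a_\rho)\ge v(z_\rho):\rho<\lambda\,\}$, i.e. by the assertion that $x$ is again a pseudo-limit of $(a_\rho)$ — the displayed inequalities force $v(x-a_\rho)=\gamma_\rho$ since the $\gamma_\rho$ are strictly increasing. Each formula in $\Sigma$ is an instance of the formula $\phi(x;y,z)$ given by $v(x-y)\ge v(z)$, whose instances define the balls $B(y,v(z))$. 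This family of balls has VC-dimension at most $2$: three points can never be shattered, since among the three pairwise valuations the minimum is attained twice, say $v(p_1-p_2)=v(p_1-p_3)\le v(p_2-p_3)$, and then any ball containing $p_1$ and $p_3$ has $v(p_1-p_3)\ge\gamma$, whence $v(p_2-p_3)\ge\gamma$ forces $p_2$ into the ball. Hence $\phi$ is NIP. If instead $a$ is algebraic over $K$, then the relative separable closure $F$ of $K$ in $K(a)$ is an intermediate, hence immediate, separable algebraic extension of $K$, so $F=K$ by separable algebraic maximality and $K(a)/K$ is purely inseparable. Thus $a^{p^m}=c$ for some $c\in K$, and $a$ is the unique $p^m$-th root of $c$; so $\tp(a/K)$ is implied by the single equation $x^{p^m}=c$, an instance of the NIP (indeed stable) formula $\psi(x;y)$ given by $x^{p^m}=y$, whose instances define singletons.

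The step I expect to be the main obstacle is verifying, in the transcendental case, that $\Sigma(x)$ really implies the \emph{full} type $\tp(a/K)$ in $T$ and not merely the pure valued-field type. Kaplansky's theorem provides a valued-field isomorphism $K(a)\cong K(a')$ over $K$ whenever $a'$ also realizes $\Sigma$, but I must know this isomorphism is elementary. This is exactly where the hypotheses enter: I would use the Separable Relative Embedding Property of separably tame fields of Kuhlmann--Pal, together with the Ax--Kochen--Ershov principle for separably algebraically maximal Kaplansky fields of a fixed finite degree of imperfection, to promote the Kaplansky isomorphism to an elementary map. Since the extension is immediate, it induces the identity on residue field and value group, so the embedding property guarantees that $a$ and $a'$ have the same type over $K$. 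The finiteness and fixedness of the degree of imperfection is essential here, as it is what makes the class elementary and the Kuhlmann--Pal machinery applicable uniformly. Combining the two cases then shows that $\tp(a/K)$ is implied by instances of NIP formulas, which is precisely (Im).
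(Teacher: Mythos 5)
Your algebraic case is correct, and you have correctly identified the promotion of a valued-field isomorphism to an elementary map as the crux; but the case division itself rests on a false statement about pseudo-Cauchy sequences, and this is a genuine gap. You justify the transcendental case by the parenthetical claim that ``an algebraic-type sequence has only algebraic immediate pseudo-limits''. Only the converse is true: a sequence of \emph{transcendental} type has only transcendental pseudo-limits. The problem is not hypothetical here, because a separably algebraically maximal Kaplansky field of positive finite degree of imperfection need not be algebraically maximal: it can admit a proper \emph{immediate purely inseparable} extension $K(b)/K$ (this is exactly why Kuhlmann and Pal treat separably algebraically maximal fields as a genuinely larger class). Any pc-sequence $(a_\rho)$ from $K$ pseudo-converging to such a $b$ is of algebraic type, and if $c$ is a transcendental element of an immediate extension of $K(b)$ whose value exceeds all $v(b-x)$, $x\in K$, then $a=b+c$ is transcendental over $K$, $K(a)/K$ is immediate, yet every pc-sequence from $K$ pseudo-converging to $a$ also pseudo-converges to $b$ and hence is of algebraic type. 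For such an $a$ your partial type $\Sigma(x)$ does not imply $\tp(a/K)$: the algebraic element $b$ realizes $\Sigma$ as well, and $a$ and $b$ do not even have the same quantifier-free type (e.g. $x^{p^m}=b^{p^m}$ separates them). So your argument, as structured, does not cover all singletons generating immediate extensions; the dichotomy would be harmless only under full algebraic maximality (e.g. in the perfect, tame case), which is precisely not the hypothesis of this lemma.

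For comparison, the paper's proof avoids pseudo-Cauchy sequences entirely and treats algebraic and transcendental elements uniformly: the quantifier-free type of $a$ over $K$ determines the valued-field isomorphism type of $K(a)$, hence of its henselization $K(a)^h$; since $K$ is Kaplansky, the tame closure $\tilde K$ of $K(a)^h$ is unique up to isomorphism (\cite[Theorem 5.3]{KPR}) and is again an immediate extension of $K$; by \cite[Theorem 6.2]{Kuh14} the extension $K\prec\tilde K$ is elementary, so the quantifier-free type pins down a model containing $a$ and therefore implies $\tp(a/K)$, and quantifier-free valued-field formulas are NIP. Note that this also repairs the second soft spot in your sketch: AKE and the relative embedding property apply to \emph{models} of the theory, not to $K(a)$ itself, so one must first extend an isomorphism between realizations to an isomorphism of models --- and that extension is exactly what uniqueness of the henselization and of the tame closure over Kaplansky fields provides.
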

\begin{proof}
It is enough to show that if $K(a)/K$ is immediate, 
then $\tp(a/K)$ is given by quantifier-free formulas. 
Quantifier-free formulas describe the isomorphism type of $K(a)$. Let 
$K(\sqrt[p^\infty]{a})$ be the subfield of the perfect hull of $K(a)$ 
containing all $p$-power roots of $a$. Both $K(\sqrt[p^\infty]{a})$ 
and its henselization $K(\sqrt[p^\infty]{a})^h$ are unique up to isomorphism
over $K(a)$ and are separable extensions of $K$. 
Since we are working with Kaplansky fields, 
the tame closure (i.e. the maximal separable algebraic immediate extension) 
$\tilde K$ of $K(\sqrt[p^\infty]{a})^h$ 
is unique up to isomorphism (\cite[Theorem 5.3]{KPR}). 
It is also a separable and immediate extension of $K$. 
By \cite[Theorem 6.2]{Kuh14}, the extension $K\prec \tilde K$ is elementary. 
Therefore the quantifier-free type of 
$a$ over $K$ entirely describes a model containing $a$ and hence implies the full type of $a$ over $K$.
\end{proof}

Combining Lemmas \ref{SE} and \ref{Im} with Theorem \ref{SEIm}, we obtain our first main result:
\begin{thm} \label{amK}
A complete theory of separably algebraically maximal 
Kaplansky fields of finite degree of imperfection 
is NIP if the corresponding theories of residue fields and value groups are both NIP.
\end{thm}

\begin{Rem}
Using Remark \ref{NTP}, Theorem \ref{amK} also holds when we replace NIP by 
NTP$_2$.
\end{Rem}

\begin{cor} \label{equi}
Let $(K,v)$ be a henselian valued field of equicharacteristic $p \geq 0$ 
and of finite degree of imperfection. Assume that $v$ is separably tame and that $Kv$ is infinite and NIP.
Then $(K,v)$ is NIP.
\end{cor}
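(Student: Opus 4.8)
The plan is to reduce the statement to Theorem \ref{amK} by checking that $(K,v)$ is a separably algebraically maximal Kaplansky field of finite degree of imperfection whose residue field and value group are both NIP. Separable tameness of $v$ immediately supplies three of the required ingredients: $(K,v)$ is separably algebraically maximal, the value group $vK$ is $p$-divisible, and the residue field $Kv$ is perfect. Moreover $vK$, being an ordered abelian group, is NIP by Gurevich--Schmitt \cite[Theorem 3.1]{GS84}, while $Kv$ is NIP by hypothesis, and the finite degree of imperfection of $K$ is assumed. Thus the only hypothesis of Theorem \ref{amK} not yet in hand is the remaining half of the Kaplansky condition, namely that $Kv$ admits no finite extension of degree divisible by $p$.

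I would establish this condition as follows. When $p=0$ there is nothing to prove, since in residue characteristic $0$ the Kaplansky condition is vacuous. When $p>0$, I would exploit that $Kv$ is infinite and NIP. The key point is that any finite extension $L/Kv$ is interpretable in $Kv$, hence again infinite, NIP and of characteristic $p$; by the theorem of Kaplan--Scanlon--Wagner \cite{KSW} each such $L$ is therefore Artin--Schreier closed. Now suppose towards a contradiction that $Kv$ has a finite extension of degree divisible by $p$. Since $Kv$ is perfect this extension is separable, so I may pass to its Galois closure $M/Kv$, whose group $G$ has order divisible by $p$. By Cauchy's theorem $G$ contains an element $\sigma$ of order $p$, and then $M/M^{\langle\sigma\rangle}$ is a Galois extension of degree $p$, i.e.\ an Artin--Schreier extension. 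This contradicts the Artin--Schreier closedness of the finite extension $M^{\langle\sigma\rangle}$ of $Kv$. Hence $Kv$ has no finite extension of degree divisible by $p$, and together with the $p$-divisibility of $vK$ this shows $(K,v)$ is Kaplansky.

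The main obstacle is exactly this last step: Artin--Schreier closedness of $Kv$ by itself only rules out cyclic degree-$p$ extensions, so one genuinely has to feed the NIP hypothesis into \emph{all} finite extensions simultaneously (via preservation of NIP under interpretation) before the Galois-theoretic Cauchy argument can upgrade it to the full Kaplansky condition. Once $(K,v)$ is known to be separably algebraically maximal and Kaplansky of finite degree of imperfection, I would note that these are all elementary properties, so that the theory of $(K,v)$ is a theory of the type covered by Theorem \ref{amK}; as its residue field and value group are NIP, that theorem yields that $(K,v)$ is NIP, completing the proof.
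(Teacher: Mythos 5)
Your proof is correct and follows essentially the same route as the paper: reduce to Theorem \ref{amK}, where separable tameness supplies everything except the condition that $Kv$ has no finite (equivalently, by perfectness, separable) extension of degree divisible by $p$, which is then extracted from Kaplan--Scanlon--Wagner. The only difference is that the paper obtains this last condition in one step by citing \cite[Corollary 4.4]{KSW} for infinite NIP fields, whereas your argument (interpretability of finite extensions, hence their Artin--Schreier closedness, plus Cauchy's theorem on the Galois closure) is precisely the standard proof of that corollary, so you have merely inlined the citation.
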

\begin{proof} In case $p=0$ there is nothing to prove. Assume $p>0$.
The only thing left to show is that $(K,v)$ is Kaplansky, i.e., that $Kv$ has no separable extensions of degree divisible by $p$. As
we have assumed $Kv$ to be infinite NIP, it has no separable extensions of degree divisible by $p$ by \cite[Corollary 4.4]{KSW}. 
\end{proof}

\section{Henselian NIP fields and tameness}
This section is strongly influenced by Will Johnson's results in \cite{Joh15}
and \cite{Joh16}.
\begin{Def}
Let $(K,v)$ be a valued field and $p>0$ a prime. 
\begin{enumerate}
\item We say that $(K,v)$ is 
\emph{roughly $p$-divisible} if $[-v(p),v(p)] \subseteq p\cdot vK$
where $[-v(p),v(p)]$ denotes 
\begin{itemize}
\item $\{0\}$ in case $\mathrm{char}(Kv)\neq p$,
\item $vK$ in case $\mathrm{char}(K)= p$ and
\item the interval $[-v(p),v(p)] \subseteq vK$ in case $\mathrm{char}(K,Kv)=(0,p)$.
\end{itemize}
\item We say that $(K,v)$ is \emph{finitely ramified} if $\mathrm{char}(K,Kv)=(0,p)$ and the interval
$$[-v(p),v(p)] \subseteq vK$$ is finite.
\item Assume $\mathrm{char}(K,Kv)=(0,p)$ and let $\Delta_0$ denote the 
largest convex
subgroup of $vK$ not containing $v(p)$.
We say that $(K,v)$ is \emph{finitely ramified by $p$-divisible} 
if $\Delta_0$ is $p$-divisible and the induced
valuation $\bar{v}:K \twoheadrightarrow vK/\Delta_0 \cup \{\infty\}$ is finitely ramified.
\end{enumerate}
\end{Def}
Note that any finitely ramified $(K,v)$ is in particular finitely ramified
by $p$-divisible since in this case $\Delta_0=\{0\}$ is $p$-divisible.

\begin{Def}
Let $(K,v)$ be a henselian valued field and let $p=\mathrm{char}(Kv)$. 
We say that $(K,v)$ is \emph{roughly (separably) tame} if it satisfies all of the
following properties:
\begin{enumerate}
\item $Kv$ is perfect,
\item $v$ is (separably) algebraically maximal,
\item $(K,v)$ is roughly $p$-divisible or finitely ramified by $p$-divisible,
\item if $Kv$ is finite then $(K,v)$ is finitely ramified.
\end{enumerate}
\end{Def}
Note that in particular every perfect roughly separably tame field is roughly tame and that
every (separably) tame field is roughly (separably) tame.

\begin{Prop}\label{prop_pp}
Let $(K,v)$ be a non-trivially valued NIP field of equicharacteristic $p$. Then $(K,v)$ is Kaplansky.
\end{Prop}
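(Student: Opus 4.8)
We must show: if $(K,v)$ is a non-trivially valued NIP field of equicharacteristic $p$, then $(K,v)$ is Kaplansky, i.e. (since we are in equicharacteristic $p$, so $\mathrm{char}(Kv)=p$) the value group $vK$ is $p$-divisible and the residue field $Kv$ admits no finite extension of degree divisible by $p$.

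**The approach.** The plan is to exploit the Artin–Schreier closedness of infinite NIP fields of characteristic $p$, which is exactly the content of \cite[Corollary 4.4]{KSW} cited earlier in the paper (and already used in Corollary \ref{equi}). The statement splits into two claims, and I would handle them as the two halves of "Kaplansky." First I would verify that $Kv$ admits no finite separable extension of degree divisible by $p$. The key observation is that NIP is inherited by the residue field: since $(K,v)$ is NIP and non-trivially valued, the residue field $Kv$ is interpretable (via the stably embedded residue sort, or simply because $Kv$ is a quotient of definable sets $\mathcal O_v / \mathfrak m_v$), hence $Kv$ is itself NIP as a pure field. If $Kv$ is finite there is nothing to prove. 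If $Kv$ is infinite, then $Kv$ is an infinite NIP field of characteristic $p$, so by \cite[Corollary 4.4]{KSW} it is Artin–Schreier closed and therefore has no Galois (in particular no finite separable) extension of degree divisible by $p$. This gives the residue-field half of the Kaplansky condition.

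**The value group half.** Next I would show $vK$ is $p$-divisible. The natural route is again to push everything down to the field $K$ itself: $K$ has characteristic $p$ and is infinite (a non-trivially valued field is infinite), and $K$ is NIP, so by \cite[Corollary 4.4]{KSW} the field $K$ is Artin–Schreier closed. Artin–Schreier closedness is the statement that the additive map $x \mapsto x^p - x$ is surjective on $K$. I would then translate $p$-divisibility of $vK$ into a valuation-theoretic consequence of this surjectivity: given any $\gamma \in vK$ realized as $v(a)$ with $v(a)<0$, one solves $x^p - x = a^{-1}$ (or an appropriate element of sufficiently negative value) and reads off from the ultrametric inequality that $v(x^p) = v(x^p - x) = -\gamma'$, forcing $\gamma'$ into $p \cdot vK$; a short computation of this kind shows every sufficiently negative value, and hence every value by translation through the group structure, lies in $p \cdot vK$.

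**Main obstacle and care points.** The conceptually clean invocation is \cite[Corollary 4.4]{KSW}, so the real work is the elementary but slightly delicate valuation arithmetic connecting Artin–Schreier surjectivity to $p$-divisibility of the value group: one must handle the interaction between the value of $x^p$ and of $x^p-x$ under the ultrametric inequality, treating the case $v(x)>0$ versus $v(x)<0$ correctly, and must make sure one covers \emph{all} of $vK$ rather than only an upward- or downward-closed piece. I expect this to be the main point requiring attention, though it is standard once set up (it is essentially the well-known fact that an Artin–Schreier closed valued field of residue characteristic $p$ has $p$-divisible value group and Artin–Schreier closed residue field). One should also double-check that the non-triviality hypothesis is actually needed only to guarantee $K$ is infinite (automatic for any field, but the residue field $Kv$ could a priori be finite, which is why the residue-field half is phrased to allow that case). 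Assembling the two halves yields that $(K,v)$ is Kaplansky, completing the proof.
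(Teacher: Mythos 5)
Your overall strategy---reducing everything to the Artin--Schreier closedness results of \cite{KSW}---is the same as the paper's, and your treatment of the value group is fine: the ultrametric computation you sketch (solve $x^p-x=a$ with $v(a)<0$, deduce $v(a)=pv(x)$, then use $\gamma\mapsto-\gamma$ to cover all of $vK$) is exactly the standard argument, which the paper simply quotes as \cite[Proposition 5.4]{KSW}. However, the residue-field half of your proof has two genuine gaps. First, the sentence ``if $Kv$ is finite there is nothing to prove'' is wrong: a finite field of characteristic $p$ has Galois extensions of every degree, in particular of degree $p$, so the Kaplansky condition \emph{fails} whenever $Kv$ is finite. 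This case cannot be ``allowed''; it must be excluded. The paper does this by invoking \cite[Proposition 5.3]{KSW}, which shows that the residue field of a non-trivially valued NIP field of positive characteristic is infinite---this, and not merely the infinitude of $K$, is where the non-triviality hypothesis is really used.

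Second, the paper's definition of Kaplansky requires $Kv$ to have no finite extensions of degree divisible by $p$ at all, purely inseparable ones included; equivalently, $Kv$ must in addition be perfect. Artin--Schreier closedness via \cite[Corollary 4.4]{KSW} only rules out \emph{separable} extensions, and NIP does not imply perfection (as the paper itself notes when contrasting NIP with strong dependence), so a separate argument is needed. The paper supplies one: lift $\bar a\in Kv$ to $a\in K$, pick $c\in K$ with $v(c)>0$, and consider the separable polynomial $P(X)=X^p+cX-a$. Since $K$ itself is an infinite NIP field of characteristic $p$, it has no separable extensions of degree divisible by $p$, so $P$ factors non-trivially over $K$; as $\mathcal{O}_v$ is integrally closed, the factorization can be taken with non-constant monic factors in $\mathcal{O}_v[X]$, and reducing modulo the maximal ideal yields a proper factorization of $X^p-\bar a$ over $Kv$, forcing $\bar a^{1/p}\in Kv$. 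Your proposal omits this entirely, so as written it establishes only the separable part of the Kaplansky condition.
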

\begin{proof} Any valued field of residue characteristic $\mathrm{char}(Kv)=0$
is Kaplansky. So we assume $p>0$ for the rest of the proof.
As $Kv$ is interpretable in $(K,v)$, $Kv$ is NIP and moreover, 
by \cite[Proposition 5.3]{KSW},
infinite. 
Thus, $Kv$ admits no separable extensions of degree divisible by $p$, by \cite[Corollary 4.4]{KSW}. 
The $p$-divisibility of the value group follows from \cite[Proposition 5.4]{KSW}.
%
%
To see that $Kv$ is perfect, let $\bar a\in Kv$. Pick some $a\in K$ of residue 
$\bar a$ and $c\in \mathfrak{m}_v$. Consider the polynomial $P(X)=X^p + c X - a$. As $K$ has no separable extension of degree divisible by $p$, this polynomial factors in $K$. As the valuation ring $\mathcal O_v$ is integrally closed, there is a factorization $P = Q \cdot R$ with $Q, R \in \mathcal O_v[X]$ non-constant and monic. But then this descends in $Kv$ to a factorization of $\bar P = X^p - \bar a$. This implies that $\bar a$ has a $p$-th root in $Kv$ and hence $Kv$ is perfect.
\end{proof}

\begin{Cor} \label{Cor4.1}
Any NIP henselian valued field $(K,v)$ of equicharacteristic $p$ is separably tame.
\end{Cor}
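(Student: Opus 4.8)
The plan is to assemble the three defining conditions of separable tameness---$p$-divisibility of $vK$, perfection of $Kv$, and separable algebraic maximality of $(K,v)$---taking the first two straight from Proposition~\ref{prop_pp} and deriving the third from the Artin--Schreier closedness of the pure field $K$ supplied by \cite{KSW}. Applying Proposition~\ref{prop_pp} (so that $v$ is taken non-trivial, as its hypothesis requires), we get that $(K,v)$ is Kaplansky, whence $vK$ is $p$-divisible; moreover the proof of that proposition explicitly shows that $Kv$ is perfect. Thus two of the three conditions are already secured, and it remains only to prove that $(K,v)$ has no proper immediate separable-algebraic extension.

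For this I would exploit that the pure field $K$ is a reduct of $(K,v)$ and is therefore NIP. Being an infinite field of characteristic $p$ (a field carrying a non-trivial valuation is infinite), it is Artin--Schreier closed by the Kaplan--Scanlon--Wagner theorem (\cite{KSW}, exactly as invoked for the residue field in Corollary~\ref{equi}). Concretely, $K$ admits no extension generated by a root of a polynomial $X^p - X - a$ with $a \in K$; in particular it admits no \emph{immediate} Artin--Schreier extension.

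The final, and genuinely non-formal, step is the valuation-theoretic bridge: for a henselian field of equicharacteristic $p$, separable algebraic maximality is equivalent to the absence of proper immediate Artin--Schreier extensions. Granting this, since $K$ has no Artin--Schreier extensions at all it certainly has no immediate ones, so $(K,v)$ is separably algebraically maximal, and together with the first paragraph this yields that $(K,v)$ is separably tame. The main obstacle is precisely this equivalence. One direction is formal---an immediate Artin--Schreier extension is a proper immediate separable-algebraic extension---while the substantive direction requires showing that \emph{any} proper immediate separable-algebraic extension can be refined, through the theory of the defect and Ostrowski's lemma, to a step of degree $p$, and that such an immediate degree-$p$ separable extension is necessarily of Artin--Schreier type in equicharacteristic $p$. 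This is standard within Kuhlmann's ramification-theoretic framework (compare the tools underlying \cite{KPR}), but it is the one place where real valuation theory, rather than bookkeeping on top of Proposition~\ref{prop_pp} and \cite{KSW}, is needed.
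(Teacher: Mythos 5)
Your first paragraph tracks the paper exactly: Proposition~\ref{prop_pp} gives the $p$-divisibility of $vK$ and (via its proof) the perfection of $Kv$, so everything hinges on separable algebraic maximality. (The paper also disposes of two cases you skip over: equicharacteristic $0$, and finite $K$, where $v$ is trivial.) The genuine gap is your ``valuation-theoretic bridge.'' The equivalence you lean on --- for henselian equicharacteristic-$p$ fields, separable algebraic maximality is equivalent to the absence of proper immediate Artin--Schreier extensions --- is not a citable standard fact, and both halves of your sketch of it break down. First, a proper immediate separable extension $L/K$ of degree $p^m$ cannot in general be refined to a degree-$p$ step \emph{over $K$}: passing to the Galois closure $N$ and applying ramification theory, the $p$-group part in which the defect lives sits above the ramification field $F$ of $N/K$, a (possibly proper) tame extension of $K$; Ostrowski plus multiplicativity of the defect then yields an immediate Artin--Schreier extension of some field \emph{between $F$ and $N$}, not of $K$ itself, and descending such an extension to $K$ is precisely the hard part (this descent problem is the subject of Kuhlmann's classification of Artin--Schreier defect extensions, and it does not deliver your equivalence; his characterizations of algebraically maximal fields go through general $p$-polynomials, not Artin--Schreier polynomials alone). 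Second, an immediate separable extension of degree $p$ need not be Galois --- its closure can have group $C_p \rtimes C_t$ with $t>1$ --- so it need not be of Artin--Schreier type; indeed, Artin--Schreier closedness of $K$ says nothing about Artin--Schreier extensions of the tame extension $F$, which is exactly where such examples come from.

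The gap is also unnecessary, and this is where your route diverges from the paper's. The Kaplan--Scanlon--Wagner result you cite gives far more than Artin--Schreier closedness: \cite[Corollary 4.4]{KSW} states that an infinite NIP field of characteristic $p$ has \emph{no finite separable extension of degree divisible by $p$ whatsoever}. With that in hand no refinement or descent is needed: by the Lemma of Ostrowski (\cite[Theorem 3.3.3]{EP05}), a proper finite immediate extension of a henselian field has degree equal to its defect, a positive power of $p$; a proper finite immediate \emph{separable} extension would therefore be a separable extension of degree divisible by $p$, contradicting \cite[Corollary 4.4]{KSW} applied directly to $K$. This is the paper's two-line argument. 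So the fix is to replace ``$K$ is Artin--Schreier closed'' by the full strength of \cite[Corollary 4.4]{KSW} and apply Ostrowski to the putative immediate extension itself, at which point your bridge lemma, with all its real difficulties, is not needed at all.
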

\begin{proof} The statement is clear for henselian fields of equicharacteristic $0$.
Assume that $(K,v)$ is an NIP henselian valued field and that $\mathrm{char}(K)=p>0$ holds. In particular, $K$
is either finite or has no separable extensions of degree divisible by $p$ by \cite[Corollary 4.4]{KSW}.
If $K$ is finite, $v$ is trivial and $(K,v)$ is separably tame. Assume that $K$ is infinite.
By Proposition \ref{prop_pp}, all that is left to show is that $v$ is separably algebraically maximal. 
This follows from the Lemma of Ostrowski
(see \cite[Theorem 3.3.3]{EP05}):
The degree of any immediate finite Galois extension $(K,v) \subseteq (L,w)$ is divisible by $p$. 
Thus, $(K,v)$ is indeed separably algebraically maximal.
\end{proof}
In \cite[p. 25]{Joh15}, Johnson remarks that his proof of \cite[Lemma 6.8]{Joh15} actually applies to strongly dependent fields. This fact is also proven in
\cite[Lemma 4.1.1 and Theorem 4.3.1]{Joh16}. 
\begin{Prop}[Johnson]
Let $(K,v)$ be a strongly dependent henselian field of mixed characteristic
$(0,p)$.
Then $(K,v)$ is algebraically maximal, $Kv$ is perfect and 
$(K,v)$ is either finitely ramified or roughly $p$-divisible.
\end{Prop}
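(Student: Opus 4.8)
The plan is to fix a strongly dependent $(K,v)$ of mixed characteristic $(0,p)$ and to show that each of the three possible failures — a proper immediate algebraic extension, imperfection of $Kv$, or an interval $[-v(p),v(p)]$ that is at once infinite and not contained in $p\cdot vK$ — produces a configuration violating strong dependence. I would first record that $Kv$ and $vK$, being interpretable in $(K,v)$, are themselves strongly dependent (a field of characteristic $p$, respectively an ordered abelian group). These facts organise but do not settle the three claims: as I note below, the conclusions are genuinely statements about how the prime $p$, through $v(p)$ and the residue characteristic, interacts with the valuation, and none of them follows from the abstract classification of the interpreted structures alone. As an organising device I would pass to the coarsening $w$ of $v$ whose convex subgroup is the convex hull $\Delta$ of $\mathbb Z\cdot v(p)$, so that $(K,w)$ is equicharacteristic $(0,0)$ while the induced valuation $\bar v$ on $Kw$ is of mixed characteristic with value group $\Delta$ (archimedean over $v(p)$) and residue field $Kv$; this isolates all the ramification data in $\Delta$.

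For algebraic maximality I would argue by contradiction. By henselianity together with Ostrowski's lemma (\cite[Theorem 3.3.3]{EP05}), a proper immediate algebraic extension forces a degree-$p$ immediate, hence defect, extension, equivalently a pseudo-Cauchy sequence $(a_i)$ in $K$ with no pseudo-limit in $K$ but acquiring one in the extension. The associated strictly descending, never-stabilising chain of valuation balls $B_i=\{x:v(x-a_i)\geq \gamma_i\}$ has empty intersection in $K$ yet is realised upstairs; the key step is to convert this chain into an array of mutually indiscernible sequences that a single element can cut in unboundedly many ways, contradicting strong dependence. I expect this ball-to-pattern construction to be one of the two main technical obstacles.

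For perfection, suppose $\bar a\in Kv\setminus (Kv)^p$, choose a unit lift $a$, and adjoin $a^{1/p}$: this gives an unramified degree-$p$ extension of $(K,v)$ whose residue extension $Kv(\bar a^{1/p})/Kv$ is purely inseparable. Here the imperfection lives entirely in the residue field, since $K$ itself is perfect (characteristic $0$), so it cannot be detected through $[K:K^p]$ and must be extracted from the valued-field structure — indeed a strongly dependent field of characteristic $p$ need not be perfect (e.g.\ $SCF_{p,1}$), so henselianity and the valuation are used essentially. The plan is to exploit the purely inseparable residue tower to manufacture a definable family of increasing complexity and to read off from it a violation of strong dependence of $(K,v)$. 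I regard transforming imperfection of $Kv$ into such a pattern as the second main obstacle.

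Finally, for the value group I would work inside $\Delta$, which is archimedean over $\gamma=v(p)$ and hence either discrete or dense. If $\Delta$ is discrete then $[-\gamma,\gamma]$ is finite and $(K,v)$ is finitely ramified. Otherwise $\Delta$ is dense and I must show $[-\gamma,\gamma]\subseteq p\cdot vK$; as with perfection, this is not forced by the strongly dependent ordered abelian group alone, since a dense non-$p$-divisible group such as $\mathbb Z[1/q]$ with $q\neq p$ is perfectly strongly dependent. So I would assume a non-$p$-divisible $\delta\in[-\gamma,\gamma]$ together with the infinitude of the interval and build an alternating configuration in $K$ indexed by the resulting cosets, again contradicting strong dependence of $(K,v)$. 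In all three cases the conclusion reduces to the same task — turning a combinatorial failure of tameness of $(K,v)$ into an explicit witness of unbounded dp-rank — and this is the hard part throughout; for the construction itself I would follow the dp-rank computation underlying \cite[Lemma 6.8]{Joh15}.
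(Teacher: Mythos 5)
The paper does not actually prove this Proposition---it is quoted from \cite{Joh15} (Johnson's remark that his proof of \cite[Lemma 6.8]{Joh15} shows it), and the closest argument the paper presents is its NIP adaptation, Proposition \ref{rtame}. Measured against that proof, your proposal has genuine gaps rather than being an alternative route. Its three central steps---the ``ball-to-pattern construction'' for algebraic maximality, the ``inseparable tower to definable family'' step for perfection, and the ``alternating configuration'' for $p$-divisibility---are exactly the parts you defer as technical obstacles, and deferring them to ``the dp-rank computation underlying \cite[Lemma 6.8]{Joh15}'' is circular, since the statement to be proved \emph{is} the conclusion of that computation, whose proof moreover has a completely different shape: it never manufactures explicit ict-patterns from failures of tameness. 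Instead it decomposes $v$ into \emph{three} places using both $\Delta_0$ (the largest convex subgroup not containing $v(p)$) and $\Delta$ (the smallest containing it), and then quotes structural theorems: Artin-Schreier closedness of infinite NIP fields of characteristic $p$ \cite[Theorem 4.3]{KSW} together with Ostrowski's lemma for the bottom place, spherical completeness of the middle place in a saturated model, henselianity in equicharacteristic $0$ for the top place, perfection via \cite[Proposition 3.2(i)]{Koe04} and Lemma \ref{finite}, and, for the value group, external definability of convex subgroups (Shelah expansion) plus definability of the largest $p$-divisible convex subgroup.

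Two steps in your outline would actually fail. First, your two-step decomposition loses the property that makes the argument work: the convex hull $\Delta$ of $\mathbb{Z}\cdot v(p)$ need not be archimedean, so your claim ``if $\Delta$ is discrete then $[-v(p),v(p)]$ is finite'' is false---take $vK=\mathbb{Z}\times\mathbb{Z}$ ordered lexicographically with $v(p)=(1,0)$: there $\Delta$ is the whole group, it is discrete, yet the interval $[-v(p),v(p)]$ is infinite. The discrete/dense dichotomy only becomes useful after passing to the archimedean quotient $\Delta/\Delta_0$, which is precisely why the paper (and Johnson) split off $\Delta_0$ as well; finite ramification corresponds to $\Delta_0=\{0\}$ together with $\Delta\cong\mathbb{Z}$. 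Second, the ball-to-pattern idea for algebraic maximality cannot work from the data you propose to use: a strictly descending chain of balls with empty intersection does not contradict strong dependence, since any algebraically closed valued field that is not maximally complete (for instance $\mathbb{C}_p$) is dp-minimal and has such chains. What distinguishes a defect extension is the algebraicity of the pseudo-limit, and converting \emph{that} into a combinatorial contradiction is essentially the content of the Kaplan--Scanlon--Wagner theorem, not a routine construction; in the actual proof it enters only for the equicharacteristic-$p$ bottom place, while the middle place is handled by saturation (which your outline never invokes, though rough tameness being elementary is what licenses it) and the top place by henselianity in equicharacteristic $0$.
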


We now generalize the proof of \cite[Lemma 6.8]{Joh15} slightly to show an NIP version of this statement. As NIP (unlike strong dependence)
does not imply field perfection, we have to add an extra assumption. 
We need the
following lemma, which is very well-known. The second part of the 
statement
is also proven (with essentially the same proof) 
in \cite[Proposition 3.2(a)]{Koe04}. We give the short proof for the 
convenience
of the reader.
\begin{Lem} \label{finite}
Let $(K,v)$ be a valued field. Then, the following holds:
\begin{enumerate}
\item for any $n>0$ we have
$$|K^\times/(K^\times)^n|\geq |Kv^\times/(Kv^\times)^n|,$$ 
\item  in case
$\mathrm{char}(K)\neq \mathrm{char}(Kv)=p>0$ and the index 
$|K^\times/(K^\times)^p|$
is finite
then $Kv$ is perfect.
\end{enumerate}
\end{Lem}
\begin{proof}
\begin{enumerate}
\item
Let $\alpha, \beta \in Kv^\times$ be such that $\alpha \not\equiv \beta 
\,\mathrm{mod}\, (Kv^\times)^n$. 
Then for any lifts $a, b \in \mathcal{O}_v^\times$ of $\alpha, \beta$ one has 
$a \not\equiv b \,\mathrm{mod}\, (K^\times)^n$. Indeed, else there would be
$c \in \mathcal{O}_v^\times$ with $a = c^n b$. But then 
$\alpha = \gamma^n\beta$, for 
$\gamma = res(c) \in Kv^\times$, a contradiction.
\item Assume that $\mathrm{char}(K)\neq \mathrm{char}(Kv)=p>0$ and $Kv$
is imperfect (and hence in particular infinite). 
First, note that $|Kv^\times/(Kv^\times)^p|$ is infinite: since $Kv^p$ is a
proper subfield of $Kv$, the quotient $|Kv^\times/(Kv^p)^\times|$ is
infinite (as it is the projectivization of an $Kv^p$-vector space of dimension
at least $2$). The first part of the lemma now implies 
$|K^\times/(K^\times)^p|$ infinite.
\end{enumerate}
\end{proof}

\begin{Prop}\label{rtame}
Let $(K,v)$ be a henselian valued NIP field of mixed characteristic 
$(0,p)$ such that
$K^\times/(K^\times)^p$ is finite. Then $(K,v)$ is roughly separably tame.
\end{Prop}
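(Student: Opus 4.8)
The plan is to reduce to mixed characteristic and then adapt Johnson's proof of \cite[Lemma 6.8]{Joh15}, using the finiteness of $K^\times/(K^\times)^p$ as a substitute for the strong dependence hypothesis there. First I would dispose of the equicharacteristic cases. If $\mathrm{char}(Kv)=0$ then $\mathrm{char}(K)=0$, and if $\mathrm{char}(K)=p>0$ then we are in equicharacteristic $p$; in both cases the Corollary following Proposition~\ref{prop_pp} shows $(K,v)$ is separably tame, from which conditions (1)--(3) of roughly separable tameness are immediate ($vK$ is $p$-divisible, hence roughly $p$-divisible; $Kv$ is perfect; $v$ is separably algebraically maximal), while condition (4) is addressed via \cite[Proposition 5.3]{KSW}, by which a non-trivially valued NIP field has infinite residue field. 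Thus I may assume $(K,v)$ has mixed characteristic $(0,p)$. In particular $\mathrm{char}(K)=0$, so $K$ is perfect and every algebraic extension of $K$ is separable; hence in this case separable algebraic maximality coincides with algebraic maximality.

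Next I would extract the finiteness consequences of the hypothesis. Since $K^\times/(K^\times)^p$ is finite, Lemma~\ref{finite} gives that $Kv^\times/(Kv^\times)^p$ is finite, and the canonical surjection $K^\times/(K^\times)^p \twoheadrightarrow vK/pvK$ (induced by $v$) shows that $vK/pvK$ is finite. From this I obtain condition (1). The residue field $Kv$ is interpretable, hence NIP; if it is finite it is perfect, and if it is infinite then imperfection is impossible, since for $\alpha\in Kv\setminus (Kv)^p$ the elements $1+c\alpha$ with $c\in (Kv)^p$ lie in pairwise distinct cosets modulo $(Kv^\times)^p$, and $(Kv)^p$ is infinite, contradicting the finiteness of $Kv^\times/(Kv^\times)^p$. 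Either way $Kv$ is perfect, and when it is infinite I also record, by \cite[Corollary 4.4]{KSW}, that $Kv$ has no separable extension of degree divisible by $p$.

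The heart of the argument is conditions (2), (3) and (4), and here I would follow the proof of \cite[Lemma 6.8]{Joh15} essentially line by line, observing that the only feature of strong dependence used there is the finiteness of the $p$-quotients $K^\times/(K^\times)^p$, $vK/pvK$ and $Kv^\times/(Kv^\times)^p$, all of which I have now secured without it. Passing to the coarsening of $v$ whose value group is the convex subgroup generated by $v(p)$ and analysing the position of $v(p)$ relative to $p\cdot vK$ yields the dichotomy of condition (3): the interval $[-v(p),v(p)]$ is either finite (finitely ramified) or contained in $p\cdot vK$ (roughly $p$-divisible). The same analysis, together with Ostrowski's lemma (every immediate finite extension has $p$-power degree) and the absence of defect forced by finiteness of $K^\times/(K^\times)^p$, gives algebraic maximality, i.e.\ condition (2); and the sub-case of finite $Kv$, in which the roughly $p$-divisible alternative is ruled out, produces condition (4).

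The main obstacle I anticipate is precisely this faithful transcription of Johnson's field-theoretic analysis. The value-group dichotomy is \emph{not} a formal consequence of $vK/pvK$ being finite alone: one can exhibit ordered abelian groups $\Gamma$ with $\Gamma/p\Gamma$ finite and a distinguished $\gamma_0$ for which $[-\gamma_0,\gamma_0]$ is neither finite nor contained in $p\Gamma$, so the argument genuinely requires the interplay of henselianity, the chosen coarsening, and the behaviour of $p$-th powers in $K$. The real work therefore lies in verifying that each appeal to strong dependence in \cite[Lemma 6.8]{Joh15} is in fact only an appeal to one of these finiteness statements, and in checking that the mixed characteristic hypothesis legitimately turns Johnson's ``algebraically maximal'' into the claimed ``separably algebraically maximal'' while keeping the perfection of $Kv$ under control via Lemma~\ref{finite} rather than via strong dependence.
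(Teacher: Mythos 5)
Your overall route is the same as the paper's: dispose of the equicharacteristic cases via the corollary to Proposition~\ref{prop_pp}, then in mixed characteristic $(0,p)$ adapt Johnson's proof of \cite[Lemma 6.8]{Joh15}, with the finiteness of $K^\times/(K^\times)^p$ standing in for strong dependence. The parts you actually prove are correct: the reduction to mixed characteristic is fine, and your direct perfection argument (injectivity of $c\mapsto [1+c\alpha]$ on $(Kv)^p$ for $\alpha\notin (Kv)^p$, against Lemma~\ref{finite}) is a valid and slightly more elementary alternative to the paper's use of \cite[Proposition 3.2(i)]{Koe04}. Note, however, that what is actually needed downstream is perfection of an intermediate residue field (the field $K_2$ below), not just of $Kv$; this is recoverable by applying Lemma~\ref{finite} to the relevant coarsening, but you do not say so.

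The genuine gap is that the core of the proposition --- (separable) algebraic maximality and the dichotomy ``finitely ramified or roughly $p$-divisible'' --- is exactly what you defer to a line-by-line transcription of Johnson, and your sketch of that transcription misidentifies what makes it work. The paper decomposes $v$ into three places $K\to K_1\to K_2\to Kv$ via the convex subgroups $\Delta_0\leq \Delta$ (biggest not containing $v(p)$, smallest containing it), and then: (i) before any NIP-based result can be applied to $K_1$ or $K_2$, one must know these fields are NIP, and they are \emph{not} interpretable in $(K,v)$, since $\Delta$ and $\Delta_0$ need not be definable; the paper's Claim~1 gets this from external definability of convex subgroups together with NIP of the Shelah expansion \cite[Corollary 3.24]{Simon:book}, a step with no counterpart in your outline. (ii) Algebraic maximality is not obtained from ``absence of defect forced by finiteness of $K^\times/(K^\times)^p$''; no such implication is available, and the finiteness hypothesis enters only through perfection. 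Instead it is proved piecewise: $K\to K_1$ is henselian of equicharacteristic $0$; $K_1\to K_2$ is spherically complete by saturation (legitimate because rough tameness is elementary) combined with archimedeanity of $\Delta/\Delta_0$; and $K_2\to Kv$ is algebraically maximal because $K_2$ is perfect, infinite and NIP, hence has no finite extensions of degree divisible by $p$ by \cite[Theorem 4.3]{KSW}. (iii) Condition (3) is then proved by a definability argument --- the largest $p$-divisible convex subgroup $\Delta_p$ of $\Gamma$ is definable, whereas $\Delta_0$ cannot be definable when $v$ is not finitely ramified, so $\Delta_0\lneq\Delta_p$ and hence $\Delta\leq\Delta_p$ --- and condition (4) uses \cite[Proposition 5.3]{KSW} plus saturation. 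So your plan of ``verifying that every appeal to strong dependence is an appeal to finiteness of $p$-quotients'' cannot be carried out as stated: the argument also appeals irreducibly to NIP (Artin--Schreier closure), to saturation, and to external definability, and supplying these is precisely the content your proposal omits.
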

\begin{proof}
This proof follows the proof of \cite[Lemma 6.8]{Joh15} closely.
Note that since we have $\mathrm{char}(K)=0$, the notions of rough tameness and 
rough separable tameness coincide.
Moreover, we may assume that $(K,v)$ is saturated as being 
roughly tame is preserved under elementary equivalence of valued fields. 
  
We write $\Gamma:=vK$.
Let $\Delta_0 \leq \Gamma$ be the biggest convex subgroup not containing $v(p)$ and let $\Delta \leq \Gamma$ be
the smallest convex subgroup containing $v(p)$. We get the following decomposition of the place $\varphi_v:K \to Kv$
corresponding to $v$:
$$K =K_0 \xlongrightarrow{\Gamma/\Delta} K_1 \xlongrightarrow{\Delta/\Delta_0} K_2  \xlongrightarrow{\Delta_0} K_3=Kv$$
where every arrow is labelled with the corresponding value group.
Note that $\mathrm{char}(K)=\mathrm{char}(K_1)=0$ and $\mathrm{char}(K_2)= \mathrm{char}(Kv)=p$.
Let $v_i$ denote the valuation on $K_i$ corresponding to the place $K_i \to K_{i+1}$.

\smallskip \noindent
\emph{Claim 1:} The fields $K_i$ are all NIP (as pure fields).

\smallskip
\emph{Proof of Claim 1:} By assumption, $K=K_0$ is NIP and - as $Kv$ is 
interpretable in $(K,v)$ - so is $K_3=Kv$. 
Any convex subgroup of an ordered abelian group is externally definable (i.e.~definable with parameters from the monster
model), in particular both $\Delta$ and $\Delta_0$ are definable in the Shelah expansion $K^{Sh}$ of $K$. As
$K^{Sh}$ also has NIP (\cite[Corollary 3.24]{Simon:book}), all three places are interpretable in an NIP field.
In particular, $K_1$ and $K_2$ are NIP.
\smallskip

\noindent\emph{Claim 2:} If $w$ is a (not necessarily proper) coarsening of $v$ on $K$, then $Kw$ is perfect. 

\smallskip
\emph{Proof of Claim 2:}
For any coarsening $w$ of $v$ with $\mathrm{char}(Kw)=0$ the residue field $Kw$ is perfect. The coarsest coarsening $u$ of
$v$ with $\mathrm{char}(Ku)=p$ corresponds exactly to the place $K \to K_2$. Note that we have 
$\mathcal{O}_{v_1}[\frac{1}{p}]=K_1$ by construction. 
As $|K_1^\times/(K_1^\times)^p|$ is finite by the first part of Lemma 
\ref{finite}, the second part of Lemma \ref{finite} 
implies that $K_2$ is perfect. By Claim 1
and Proposition 
\ref{prop_pp}, all valuation rings $\mathcal{O}_w \subseteq K$ with $\mathcal{O}_v \subseteq\mathcal{O}_w \subseteq\mathcal{O}_u$ have perfect residue field.

\smallskip
In particular, Claim 2 implies that $Kv$ is perfect.

\smallskip
\noindent\emph{Claim 3:} $(K,v)$ is algebraically maximal.

\smallskip
\emph{Proof of Claim 3:} We show that all three places from our decomposition of $v$ 
are algebraically maximal.

By Claim 1, $K_2$ is an NIP field of characteristic $p$. If it is finite, then $K_2 = Kv$. Otherwise, by Claim 2 and \cite[Theorem 4.3]{KSW}, $K_2$ has no 
finite extensions of degree divisible by $p$. 
Thus, any henselian valuation on $K_2$ with residue characteristic $p$ is 
algebraically maximal and has a $p$-divisible value group
(cf.~\cite[Remark 6.7]{Joh15}).
Hence the place $K_2 \to Kv$ is algebraically maximal.

By saturation, any countable chain of balls of the place
$K \to Kv$ has non-empty intersection, thus the same holds for the place $K_1\to K_2$. As $\Delta/\Delta_0$ is archimedean, 
in fact any chain of balls with respect to this place has a non-empty intersection. Thus, the place $K_1 \to K_2$ is
spherically complete and therefore defectless (and so in particular algebraically maximal).

Finally, the place $K \to K_1$ is algebraically maximal as it is henselian of equicharacteristic $0$. Hence, we conclude that 
$v$ is algebraically maximal. This
proves the claim.
\smallskip

Next, we show that $(K,v)$ is either finitely ramified by $p$-divisible or roughly $p$-divisible.
Note first that $\Delta_0$ is $p$-divisible: if it is not trivial, then $K_2$ is a perfect infinite NIP field of $\mathrm{char}(K_2)=p$ and 
thus has no separable extensions of degree divisible by $p$ (and hence any
valuation on $K_2$ has $p$-divisible value group). 
Let $\Delta_p$ be the largest $p$-divisible convex subgroup of $\Gamma$. Then, $\Delta_p$ is a \emph{definable}
subgroup of $\Gamma$. 
Assume that $v$ is not finitely ramified by $p$-divisible, 
then (by saturation!) $\Delta_0$ cannot be definable in $\Gamma$.
Thus, we get $\Delta_0 \lneq \Delta_p$ and hence
$\Delta \leq \Delta_p$. In particular, $vK$ is roughly $p$-divisible.

Finally, we show that if $Kv$ is finite, then $v$ is finitely ramified. If $Kv$ is finite, then \cite[Proposition 5.3]{KSW} 
implies that $v_2$ is trivial. By saturation, $v$ must be finitely ramified.
\end{proof}

We can now state and prove our second main result. Recall for a given prime $p$,
 a \emph{$p$-adically
closed field} is a field which is elementarily
equivalent to $\mathbb{Q}_p$. A \emph{$\mathcal{P}$-adically closed field} is a
finite extension of a $p$-adically closed field. For more on 
$\mathcal{P}$-adically closed fields see \cite{PrRo}.
\label{sec4}
\begin{Thm} \label{main2}
Let $(K,v)$ be a henselian valued field of residue characteristic 
$\mathrm{char}(Kv)=p$. In case $(K,v)$ has mixed characteristic, 
assume that
$K^\times/(K^\times)^p$ is finite. 
If the characteristic of $K$ is positive, assume that $K$ has finite degree
of imperfection.
Then
$$(K,v) \textrm{ is NIP }\Longleftrightarrow Kv \textrm{ is NIP and }(K,v)\textrm{ is roughly separably tame}.$$ 
\end{Thm}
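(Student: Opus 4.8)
The plan is to prove the two implications separately, with essentially all the difficulty living in the converse. The forward direction is immediate from what precedes: if $(K,v)$ is NIP then $Kv$ is NIP because it is interpretable in $(K,v)$, and $(K,v)$ is roughly separably tame by Proposition \ref{rtame}, whose hypotheses coincide with those of the theorem. So I would spend all the effort on showing that $Kv$ NIP together with rough separable tameness forces $(K,v)$ NIP.

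For the converse I would first pass to a saturated model (being NIP and being roughly separably tame are both elementary) and then split according to the characteristic. In equicharacteristic $0$ the field is tame with perfect residue field, hence a separably algebraically maximal Kaplansky field of imperfection degree $0$, so Theorem \ref{amK} (equivalently the Delon--Gurevich--Schmitt Fact) yields NIP directly from NIP of $Kv$, the value group being automatically NIP as an ordered abelian group. In equicharacteristic $p$, rough separable tameness forces $Kv$ to be infinite (condition (4), as a non-trivial equicharacteristic $p$ valuation is never finitely ramified), perfect and with $p$-divisible value group; an infinite NIP field of characteristic $p$ has no separable extensions of degree divisible by $p$ by \cite[Corollary 4.4]{KSW}, which together with perfection makes $(K,v)$ Kaplansky, while $|K^\times/(K^\times)^p|<\infty$ gives finite imperfection degree. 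So Theorem \ref{amK} again applies.

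The substantive case is mixed characteristic $(0,p)$, where I would reuse the decomposition of the place from the proof of Proposition \ref{rtame},
$$K=K_0 \xlongrightarrow{\Gamma/\Delta} K_1 \xlongrightarrow{\Delta/\Delta_0} K_2 \xlongrightarrow{\Delta_0} K_3=Kv,$$
with $\Delta_0$ the largest convex subgroup avoiding $v(p)$ and $\Delta$ the smallest one containing it, and build NIP from the inside out. Purely from rough $p$-divisibility (or finite ramification) a convexity argument shows $\Delta_0$ is $p$-divisible, and from $|K^\times/(K^\times)^p|<\infty$ together with Lemma \ref{finite} and \cite[Proposition 3.2(i)]{Koe04} that $K_2$ is perfect; as separable algebraic maximality is inherited by the coarsening, $K_2\xrightarrow{\Delta_0}Kv$ is a separably algebraically maximal Kaplansky field of imperfection degree $0$, hence NIP by Theorem \ref{amK}. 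The outermost equicharacteristic $0$ place $K\to K_1$ I would handle last: once the residue field $(K_1,\dot v)$ (carrying the composite valuation of value group $\Delta$) is known to be NIP, the Delon--Gurevich--Schmitt Fact gives that the coarsening $(K,v_\Delta)$ is NIP, and since its residue field $K_1$ is stably embedded with pure-field induced structure, Proposition \ref{se} lets me expand it by $\dot v$ while staying NIP; as $(K,v_\Delta,\dot v)$ is interdefinable with $(K,v)$, this completes the assembly.

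It thus remains to see that $(K_1,\dot v)$ is NIP, and this is where rough separable tameness bifurcates. If $vK$ is roughly $p$-divisible, the same convexity/archimedean computation upgrades $\Delta$ itself to a $p$-divisible group, so $(K_1,\dot v)$ is once more separably algebraically maximal Kaplansky of imperfection degree $0$ (its residue field $Kv$ being perfect, infinite and NIP, hence without extensions of degree divisible by $p$) and Theorem \ref{amK} applies. The remaining case, where $vK$ is finitely ramified, is the one I expect to be the main obstacle: then $\Delta_0=\{0\}$, $K_2=Kv$, and $\Delta$ is a $\mathbb{Z}$-group, so $(K_1,\dot v)$ is a finitely ramified henselian field of mixed characteristic with perfect NIP residue field — the regime of (finite extensions of) $\mathcal{P}$-adically closed fields — which is not covered by Theorem \ref{amK} because the value group is no longer $p$-divisible. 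I would treat it by verifying assumptions (SE) and (Im) for such finitely ramified defectless fields (the relative embedding property giving (SE), and algebraic maximality together with the rigidity of immediate extensions giving (Im)) and then invoking Theorem \ref{SEIm}, so that NIP of $(K_1,\dot v)$ follows from NIP of $Kv$ and of the $\mathbb{Z}$-group $\Delta$. Degenerate cases (trivial valuation, finite residue field) are NIP for trivial reasons and can be disposed of directly.
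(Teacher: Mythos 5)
Most of your proposal tracks the paper's own argument: the forward direction via Proposition \ref{rtame}, the equicharacteristic cases via Theorem \ref{amK} (which the paper packages as Corollary \ref{equi}), the same decomposition of the place along $\Delta_0\leq\Delta\leq\Gamma$, and the final assembly via stable embeddedness and Proposition \ref{se}. Your variation of applying Theorem \ref{amK} directly to the composite valuation $(K_1,\dot v)$ with value group $\Delta$ (rather than splitting it further into $v_1$ and $v_2$ and applying Proposition \ref{se} twice, as the paper does) is legitimate in the roughly $p$-divisible case, and your convexity argument that $\Delta$ is then $p$-divisible is correct.

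The finitely ramified case, however, contains a genuine gap, and it sits exactly where the hypothesis $|K^\times/(K^\times)^p|<\infty$ has to be used a second time. You propose to treat a finitely ramified henselian $(K_1,\dot v)$ with possibly \emph{infinite} perfect NIP residue field by ``verifying (SE) and (Im)'' and invoking Theorem \ref{SEIm}. But the paper's proofs of (SE) and (Im) --- Lemmas \ref{SE} and \ref{Im} --- rest on the Kuhlmann--Pal relative embedding property and AKE theorem for separably tame fields, and on the uniqueness of immediate extensions of Kaplansky fields (\cite{KPR}); every one of these requires a $p$-divisible value group and fails outright when the value group is a $\mathbb{Z}$-group. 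No tool in the paper yields (SE) or (Im) for finitely ramified mixed characteristic fields, and an NIP transfer for such fields with infinite residue field is a substantial theorem that is simply not available here. The correct move --- and what the paper does --- is to note that this case cannot arise with infinite residue field: since $\mathcal{O}_{v_1}[\frac{1}{p}]=K_1$, the group $K_1^\times/(K_1^\times)^p$ is finite by Lemma \ref{finite}, so if $\Delta$ is not $p$-divisible then \cite[Proposition 3.2]{Koe04} forces $Kv$ to be \emph{finite} and $\Delta\cong\mathbb{Z}$; then $(K_1,v_1)$ is $\mathcal{P}$-adically closed by the Prestel--Roquette axiomatization \cite[Theorem 3.1]{PrRo}, hence NIP by a known (but, contrary to your ``degenerate cases'' remark, not trivial) result. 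Your write-up never invokes the $p$-th power hypothesis in this branch, which is the tell-tale sign of the omission: without that hypothesis the finitely ramified case with infinite residue field genuinely occurs (e.g.\ $\mathrm{Frac}(W(\overline{\mathbb{F}}_p))$, which is roughly tame with residue field $\overline{\mathbb{F}}_p$), and nothing in this paper proves such fields are NIP.
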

\begin{proof}
The implication from left to right follows from the fact that the residue field $Kv$
is interpretable in $(K,v)$ and what we have shown so far:
In mixed characteristic, this is the statement of Proposition \ref{rtame}; 
the positive characteristic case is treated in Corollary \ref{Cor4.1}. All 
henselian valued fields of equicharacteristic $0$ are tame.

For the converse, assume that $Kv$ is NIP and $(K,v)$ is roughly separably tame. 
In case $(K,v)$ has equicharacteristic, the result follows from Corollary
\ref{equi}. 
Thus, we may assume that we have 
$(\mathrm{char}(K), \mathrm{char}(Kv))=(0,p)$ for some $p>0$. Furthermore, we may assume
that $(K,v)$ is sufficiently saturated.

We again write $\Gamma:=vK$ and decompose $v$ as in the proof of Proposition \ref{rtame}:
let $\Delta_0 \leq \Gamma$ be the biggest convex subgroup not containing $v(p)$ and let $\Delta \leq \Gamma$ be
the smallest convex subgroup containing $v(p)$. We get the following decomposition of the place $\varphi_v:K \to Kv$
corresponding to $v$:
$$K =K_0 \xlongrightarrow{\Gamma/\Delta} K_1 \xlongrightarrow{\Delta/\Delta_0} K_2  \xlongrightarrow{\Delta_0} K_3=Kv,$$
where every arrow is labelled with the corresponding value group.
Note that $\mathrm{char}(K)=\mathrm{char}(K_1)=0$ and $\mathrm{char}(K_2)= \mathrm{char}(Kv)=p$.
Let $v_i$ denote the valuation on $K_i$ corresponding to the place $K_i \to K_{i+1}$.

By the first part of Lemma \ref{finite}, 
we have that $|K_1^\times/(K_1^\times)^p|$ is finite. Thus, applying the
second part of Lemma
\ref{finite}, we get that
$K_2$ is perfect. Moreover, $v_2$ has $p$-divisible value group, perfect residue field and is separably algebraically maximal.
Thus, by Corollary \ref{equi}, $(K_2,v_2)$ is NIP. 
In particular, $K_2$ is either finite or admits no Galois extensions of degree divisible by $p$. In case $K_2$ is finite, $K_3=K_2$ is obviously
stably embedded.
If $K_2$ admits no Galois extensions of degree divisible by $p$, $K_3$ is stably embedded in $(K_2,v_2)$ by Lemma 
\ref{SE}. 

In case $K_2 = Kv$ is finite, the definition of rough tameness implies that $vK$ is finitely ramified. In particular,
$(K_1,v_1)$ is a finitely ramified henselian valued field with finite residue field. Thus, by the axiomatization of $\mathcal{P}$-adically closed fields (see \cite[Theorem 3.1]{PrRo}), we get that 
$(K_1,v_1)$ is $\mathcal{P}$-adically closed. As any $p$-adically closed field
is NIP, we conclude that $(K_1,v_1)$ is NIP. As $K_2$ is finite, $K_2$ is
stably embedded in $(K_1,v_1)$.

Now assume that $K_2$ admits no Galois extensions of degree divisible by $p$.
Since $(K,v)$ is roughly separably tame, $(K_1,v_1)$ is either finitely ramified
or its value group $v_1K_1$ is $p$-divisible. If $v_1K_1$ is $p$-divisible, then
 $(K_1,v_1)$ is Kaplansky. As $(K,v)$ is algebraically maximal, we get that
$(K_1,v_1)$ is algebraically maximal. Thus, by Theorem 
\ref{amK}, $(K_1,v_1)$ is also NIP.
Moreover $K_2$ is stably embedded in $(K_1,v_1)$ by Lemma \ref{SE}.
On the other hand, if $(K_1,v_1)$ is finitely ramified then it is also NIP by 
\cite[Corollaire 7.5]{Bel}. In this case, $K_2$ is stably embedded in 
$(K_1,v_1)$ by \cite[Theorem 7.3]{vdD}.

In particular, $K_1$ is an NIP field of characteristic $0$, so $(K,v_0)$ is also NIP. Note that its residue field $K_1$ is stably embedded by Lemma \ref{SE}.

What we have shown so far is that we can decompose $v=v_2 \circ v_1 \circ v_0$
such that $(K_i,v_i)$ is NIP. Moreover, we have shown 
that the residue field $K_iv_i$ is
stably embedded in $(K_i,v_i)$. By Proposition \ref{se}, 
we conclude that $(K,v)$ is NIP.
\end{proof}

Finally, we remark that there are only few finitely ramified NIP henselian 
valued fields $K$ of mixed characteristic $(0,p)$ for which 
$K^\times/(K^\times)^p$ is finite: they are all elementarily equivalent to a generalized power
series fields over $\mathcal{P}$-adically closed fields.
\begin{Cor}
Let $(K,v)$ be a finitely ramified henselian valued NIP field of mixed characteristic $(0,p)$ with $K^\times/(K^\times)^p$ finite. 
Then $Kv$ is finite and there is some coarsening
$u$ of $v$ such that $Ku$ is $\mathcal{P}$-adically closed.
\end{Cor}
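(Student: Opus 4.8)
The plan is to reuse the decomposition of $v$ from the proof of Proposition~\ref{rtame}, to extract the finiteness of $Kv$ from a short computation with principal units (this is where the hypothesis on $K^\times/(K^\times)^p$ is really used), and then to finish with the Prestel--Roquette axiomatization. No appeal to Proposition~\ref{rtame} itself turns out to be necessary.

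First I would analyze what finite ramification does to the decomposition. Write $\Gamma=vK$ and let $\Delta_0\le\Delta\le\Gamma$ be the largest convex subgroup not containing $v(p)$ and the smallest convex subgroup containing $v(p)$, giving $K=K_0\xrightarrow{\Gamma/\Delta}K_1\xrightarrow{\Delta/\Delta_0}K_2\xrightarrow{\Delta_0}K_3=Kv$ as before. Every positive $\delta\in\Delta_0$ satisfies $\delta<v(p)$, since otherwise convexity would force $v(p)\in\Delta_0$; hence $\Delta_0\subseteq[-v(p),v(p)]$, and as this interval is finite and $\Gamma$ is torsion-free we get $\Delta_0=\{0\}$. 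Thus $K_2=Kv$, and $\Delta=\Delta/\Delta_0$ is archimedean; being archimedean with a least positive element (again because $[-v(p),v(p)]$ is finite) it is isomorphic to $\mathbb{Z}$. So $v_1\colon K_1\to Kv$ is a discretely valued henselian valuation of mixed characteristic $(0,p)$ with $v_1(p)=e$ for some $e\ge 1$, while $v=v_1\circ v_0$ with $Kv_0=K_1$ of characteristic $0$.

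Next comes the crux: showing $Kv$ is finite. Let $\pi$ be a uniformizer of $v_1$, put $\mathfrak{m}=(\pi)$ and $U^{(n)}=1+\mathfrak{m}^n$. For $u=1+x\in U^{(1)}$ the binomial expansion of $u^p$ has every nonconstant term of valuation at least $\min(v_1(p)+1,p)\ge 2$, so $(U^{(1)})^p\subseteq U^{(2)}$; hence the quotient map $U^{(1)}/(U^{(1)})^p\twoheadrightarrow U^{(1)}/U^{(2)}\cong(Kv,+)$ gives $|U^{(1)}/(U^{(1)})^p|\ge|Kv|$. Moreover $U^{(1)}\cap(K_1^\times)^p=(U^{(1)})^p$: if $x\in K_1^\times$ with $x^p\in U^{(1)}$ then $v_1(x)=0$ and the residue of $x$ lies in $\mu_p(Kv)=\{1\}$, so $x\in U^{(1)}$. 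Therefore $U^{(1)}/(U^{(1)})^p$ embeds into $K_1^\times/(K_1^\times)^p$, and combining this with Lemma~\ref{finite} applied to the place $v_0$ yields
$$|K^\times/(K^\times)^p|\;\ge\;|K_1^\times/(K_1^\times)^p|\;\ge\;|U^{(1)}/(U^{(1)})^p|\;\ge\;|Kv|.$$
Since the left-hand side is finite by hypothesis, $Kv$ is finite. I would flag this principal-unit step as the real obstacle: it is precisely what converts the multiplicative finiteness assumption into finiteness of the residue field, and it notably uses neither algebraic maximality nor perfectness, so Proposition~\ref{rtame} is not needed.

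Finally, $(K_1,v_1)$ is a henselian, finitely ramified valued field of mixed characteristic with value group $\mathbb{Z}$ and finite residue field $Kv$; by the axiomatization of $\mathcal{P}$-adically closed fields (\cite[Theorem~3.1]{PrRo}) it is $\mathcal{P}$-adically closed. Taking $u=v_0$, a coarsening of $v$ with $Ku=K_1$, gives the required coarsening, which completes the proof.
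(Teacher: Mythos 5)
Your proof is correct, and its skeleton — the convex-subgroup decomposition, the observation that $\Delta_0$ is trivial under finite ramification, and the final appeal to \cite[Theorem 3.1]{PrRo} with $u=v_0$ — matches the paper's proof; note that the paper likewise only reuses the decomposition from Proposition \ref{rtame} rather than its conclusion, so your remark that rough tameness is never needed is consistent with it. Where you genuinely diverge is the middle step. The paper obtains both $v_1K_1\cong\mathbb{Z}$ and the finiteness of $Kv$ in one stroke by citing \cite[Proposition 3.2]{Koe04}, applied to the facts that $\mathcal{O}_{v_1}[\frac{1}{p}]=K_1$, that $|K_1^\times/(K_1^\times)^p|$ is finite (Lemma \ref{finite}), and that $v_1K_1$ is not $p$-divisible; that citation uses only non-$p$-divisibility, not the full strength of finite ramification. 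You instead read off discreteness of $v_1K_1$ directly from finite ramification (archimedean with a least positive element, hence $\mathbb{Z}$ by H\"older), and then prove finiteness of $Kv$ by hand via the principal-unit computation: $(U^{(1)})^p\subseteq U^{(2)}$, $U^{(1)}\cap(K_1^\times)^p=(U^{(1)})^p$ (using $\mu_p(Kv)=\{1\}$ in characteristic $p$), and $U^{(1)}/U^{(2)}\cong(Kv,+)$, the last isomorphism correctly relying on the discreteness you established first. All steps check out, and you even get the quantitative bound $|Kv|\leq|K^\times/(K^\times)^p|$. What your route buys is a self-contained, elementary argument that makes transparent exactly where the hypothesis on $K^\times/(K^\times)^p$ enters; what the paper's route buys is brevity and uniformity, since it recycles the same external result of Koenigsmann that it already invokes in Proposition \ref{rtame} and Theorem \ref{main2}.
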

\begin{proof} Assume $(K,v)$ is a finitely ramified henselian valued NIP field of mixed characteristic $(0,p)$ with $K^\times/(K^\times)^p$ finite.
We write $\Gamma:=vK$. As in the proof of Proposition \ref{rtame} we decompose $v$.
Let again $\Delta \leq \Gamma$ be the smallest convex subgroup containing $v(p)$ (and note that by finite ramification, the biggest convex subgroup not containing $v(p)$ is trivial). Thus, we get:
$$K \xlongrightarrow{\Gamma/\Delta} K_1 \xlongrightarrow{\Delta} Kv,$$
where every arrow is labelled with the corresponding value group. Let $v_1$ denote the valuation on $K_1$
corresponding to the place $K_1 \to Kv$. Note that by finite ramification, the value group $v_1K_1$ is not $p$-divisible.
Then, we have $\mathcal{O}_{v_1}[\frac{1}{p}]=K_1$ and (by Lemma 
\ref{finite}) that
$|K_1^\times/(K_1^\times)^p|$ is finite. Now \cite[Proposition 3.2]{Koe04} implies that we have $v_1K_1 \cong \mathbb{Z}$
and $Kv$ finite. Thus, by the axiomatization of $\mathcal{P}$-adically closed fields (see \cite[Theorem 3.1]{PrRo}), we get that 
$K_1$ is $\mathcal{P}$-adically closed.
\end{proof}

\bibliographystyle{alpha}
\bibliography{franzi}

\begin{thebibliography}{{Joh}15}

\bibitem[B{\'e}l99]{Bel}
Luc B{\'e}lair.
\newblock Types dans les corps valu\'es munis d'applications coefficients.
\newblock {\em Illinois J. Math.}, 43(2):410--425, 1999.

\bibitem[CH14]{CH14}
Artem Chernikov and Martin Hils.
\newblock Valued difference fields and {$\text{NTP}_2$}.
\newblock {\em Israel J. Math.}, 204(1):299--327, 2014.

\bibitem[CS13]{DepPairs}
Artem Chernikov and Pierre Simon.
\newblock Externally definable sets and dependent pairs.
\newblock {\em Israel J. Math.}, 194(1):409--425, 2013.

\bibitem[CS15]{ExtDef2}
Artem Chernikov and Pierre Simon.
\newblock Externally definable sets and dependent pairs {II}.
\newblock {\em Transactions of the American Mathematical Society},
  367(7):5217--5235, 2015.

\bibitem[Del81]{Del}
Fran{\c{c}}oise Delon.
\newblock Types sur {${\bf C}((X))$}.
\newblock In {\em Study {G}roup on {S}table {T}heories ({B}runo {P}oizat),
  {S}econd year: 1978/79 ({F}rench)}, pages Exp. No. 5, 29. Secr\'etariat
  Math., Paris, 1981.

\bibitem[EP05]{EP05}
Antonio~J. Engler and Alexander Prestel.
\newblock {\em Valued fields}.
\newblock Springer Monographs in Mathematics. Springer-Verlag, Berlin, 2005.

\bibitem[GS84]{GS84}
Yuri Gurevich and Peter~H. Schmitt.
\newblock The theory of ordered abelian groups does not have the independence
  property.
\newblock {\em Transactions of the American Mathematical Society},
  284(1):171--182, 1984.

\bibitem[{Joh}15]{Joh15}
Will {Johnson}.
\newblock {On dp-minimal fields}.
\newblock {\em ArXiv e-prints}, July 2015.
\newblock arXiv:1507.02745.

\bibitem[Joh16]{Joh16}
Will Johnson.
\newblock {\em Fun with Fields}.
\newblock PhD thesis, University of California at Berkeley, 2016.
\newblock Available at {https://escholarship.org/uc/item/6kx1f5g3}.

\bibitem[Koe04]{Koe04}
Jochen Koenigsmann.
\newblock Elementary characterization of fields by their absolute {G}alois
  group.
\newblock {\em Siberian Adv. Math.}, 14(3):16--42, 2004.

\bibitem[KP16]{Kuh14}
Franz-Viktor Kuhlmann and Koushik Pal.
\newblock The model theory of separably tame valued fields.
\newblock {\em J. Algebra}, 447:74--108, 2016.

\bibitem[KPR86]{KPR}
Franz-Viktor Kuhlmann, Matthias Pank, and Peter Roquette.
\newblock Immediate and purely wild extensions of valued fields.
\newblock {\em Manuscripta mathematica}, 55:39--68, 1986.

\bibitem[KSW11]{KSW}
Itay Kaplan, Thomas Scanlon, and Frank~O. Wagner.
\newblock Artin-{S}chreier extensions in {NIP} and simple fields.
\newblock {\em Israel J. Math.}, 185:141--153, 2011.

\bibitem[Kuh16]{Kuh13}
Franz-Viktor Kuhlmann.
\newblock The algebra and model theory of tame valued fields.
\newblock {\em J. Reine Angew. Math.}, 719:1--43, 2016.

\bibitem[PR84]{PrRo}
Alexander Prestel and Peter Roquette.
\newblock {\em Formally {$p$}-adic fields}, volume 1050 of {\em Lecture Notes
  in Mathematics}.
\newblock Springer-Verlag, Berlin, 1984.

\bibitem[Sim15]{Simon:book}
Pierre Simon.
\newblock {\em A Guide to NIP theories}, volume~44 of {\em Lecture Notes in
  Logic}.
\newblock Association for Symbolic Logic, La Jolla, CA; Cambridge University
  Press, Cambridge, 2015.

\bibitem[vdD14]{vdD}
Lou van~den Dries.
\newblock Lectures on the model theory of valued fields.
\newblock In {\em Model theory in algebra, analysis and arithmetic}, volume
  2111 of {\em Lecture Notes in Math.}, pages 55--157. Springer, Heidelberg,
  2014.

\end{thebibliography}
\end{document}